\numberwithin{equation}{section}
\newtheorem{cor}[equation]{Corollary}
\newtheorem{lem}[equation]{Lemma}
\newtheorem{thm}[equation]{Theorem}
\newtheorem{Example}[equation]{Example}
\newenvironment{ex}{\begin{Example}\rm}{\end{Example}}
\newtheorem{remark}[equation]{Remark}
\newenvironment{rmk}{\begin{remark}\rm}{\end{remark}}
\def\co{\colon\thinspace}
\newcommand{\Int}{\mbox{Int}}
\newcommand{\Diff}{\mbox{Diff\hspace{1pt}}}
\newcommand{\e}{\varepsilon}
\def\a{\alpha}
\def\de{\delta}
\def\g{\gamma}
\def\Si{\Sigma}
\def\b{\beta}
\def\d{\partial}
\def\r{\rho}
\def\s{\sigma}
\def\Z{\mathbb{Z}}
\def\N{\mathbb{N}}
\def\S1{\bf S^1}
\newcommand{\cpn}{{\mathbb{C}\mathrm{P}^n}}
\newcommand{\hpn}{{\mathbb{H}\mathrm{P}^n}}
\newcommand{\cpk}{{\mathbb{C}\mathrm{P}^k}}
\newcommand{\hpk}{{\mathbb{H}\mathrm{P}^k}}
\newcommand{\cptwo}{{\mathbb{C}\mathrm{P}^2}}
\newcommand{\R}{{\mathbb R}}
\newcommand{\C}{{\mathbb C}}
\def\equalsfill{$\m@th\mathord=\mkern-7mu
\cleaders\hbox{$\!\mathord=\!$}\hfill
\mkern-7mu\mathord=$}
\begin{document}

\abovedisplayskip=6pt plus3pt minus3pt
\belowdisplayskip=6pt plus3pt minus3pt

\title[Deformations of open nonnegatively curved manifolds]
{\bf Index theory and deformations of open nonnegatively curved manifolds}

\thanks{\it 2010 Mathematics Subject classification.\rm\ Primary 53C21.}
\thanks{This work was partially supported by the Simons Foundation grant 524838.}
\thanks{\it Key words:\rm\ nonnegative curvature, soul, space of metrics, diffeomorphism group,
Dirac operator.}\rm
\author{Igor Belegradek}

\address{Igor Belegradek\\School of Mathematics\\ Georgia Institute of
Technology\\ Atlanta, GA 30332-0160}\email{ib@math.gatech.edu}

\date{}
\begin{abstract}
We use an index-theoretic technique of Hitchin to show that 
the space of complete Riemannian metrics of nonnegative sectional curvature
on certain open spin manifolds has nontrivial homotopy groups
in infinitely many degrees. A new ingredient of independent interest is homotopy density
of the subspace of metrics with cylindrical ends. 
\end{abstract}
\maketitle
\thispagestyle{empty}

\section{Introduction}

A cheap way to deform a Riemannian metric $g$  on an $n$-manifold $M$ is to pull it back
via diffeomorphisms of $M$. To simplify matters let us
replace $\Diff(M)$ by the subgroup of diffeomorphisms supported in a smoothly embedded 
$n$-disk in $M$. 
If $g$ enjoys some geometric property, one may ask whether the just described
$\Diff (D^n, \d)$-orbit map through $g$ is null-homotopic in the space of metrics 
with the property. The question was studied by N.~Hitchin in~\cite{Hit} and more
recently in~\cite{CroSch, CSS} in the setting when $M$ is a closed spin manifold and the property 
is invertability of the Dirac operator. These authors found order two
elements in certain homotopy groups of $\Diff (D^n, \d)$ 
that remain nontrivial under the above orbit map in 
any $\Diff(M)$-invariant space of Riemannian metrics on $M$ with invertible Dirac operators. 
Nontriviality is detected by the $\a$-invariant, the $KO$-valued index of the Dirac operator.

Here we apply Hitchin's method to 
(the compact-open smooth topology on) 
the space $\mathcal R_{K\ge 0}(V)$ of complete metrics of nonnegative sectional
curvature on an open connected manifold $V\!$. 
Any such $V$ is diffeomorphic to an open tubular neighborhood of a 
certain compact totally convex submanifold without boundary, called
a {\em soul\,}, which is determined 
by the metric and the basepoint in $V\!$, see~\cite{CheGro-soul}.

Given an element in a homotopy group of $\Diff (D^n, \d)$ with
nontrivial $\a$-invariant, we push it to $\mathcal R_{K\ge 0}(V)$ 
as in Hitchin's method, assume it is null-homotopic, deform the homotopy
so that the resulting family of metrics becomes cylindrical
of positive scalar curvature on an end of $V\!$, double the metrics along a cylinder 
cross-section, and conclude that the homotopy on the double runs through metrics 
with invertible Dirac operators, which gives a contradiction. This strategy works
under certain assumptions on $V$ resulting in the following theorem. 

\begin{thm}
\label{thm: intro-main}
Let $V$ be an open connected spin $n$-manifold with a complete metric $g$ of \mbox{$K\ge 0$}  
whose soul is not flat. Suppose 
one of the following holds:
\begin{itemize}
\item[(i)]
$V$ is the product of $\R$
and a closed smooth manifold,
\item[(ii)]
the normal sphere bundle to the soul of $(V, g)$ has no section, and 
for every metric in the path-component 
of $g$ in $\mathcal R_{K\ge 0}(V)$
the normal exponential map to the soul is a diffeomorphism. 
\end{itemize}
If $m\ge n\ge 6$ and  $m\equiv 0,1 (\mathrm{mod}\, 8)$, 
then the homotopy group
$\pi_{m-n}(\mathcal R_{K\ge 0}(V), g)$  has an element of order two.
\end{thm}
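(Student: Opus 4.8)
The plan is to import the index-theoretic obstruction of Hitchin, the new feature being that the open manifold $V$ must first be compactified by doubling a compact core along a hypersurface on which all metrics in play have been straightened to a Riemannian product. \emph{The candidate class:} by \cite{Hit, CroSch, CSS}, since $m\ge n\ge 6$ and $m\equiv 0,1\,(\mathrm{mod}\,8)$ there is an element $\phi\in\pi_{m-n}(\Diff(D^n,\d))$ of order two whose $\a$-invariant is the nonzero element of $KO_{m+1}$, which is $\Z/2$ precisely when $m\equiv 0,1\,(\mathrm{mod}\,8)$; by construction $\a(\phi)$ is the family-index obstruction to filling a $\phi$-twisted sphere of metrics by metrics with invertible Dirac operator. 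Choosing a smoothly embedded $n$-disk in $V$ disjoint from a soul, extend $\phi$ by the identity to a class in $\pi_{m-n}(\Diff(V))$ and form the orbit map $o_\phi\co S^{m-n}\to\mathcal R_{K\ge 0}(V)$, $s\mapsto\phi_s^*g$; it is based at $g$ because $\phi_s$ is the identity near the boundary of the disk. Since $\psi\mapsto\psi^*g$ is a continuous, basepoint-preserving map $\Diff(V)\to\mathcal R_{K\ge 0}(V)$, the class $[o_\phi]$ is the image of $\phi$ under a group homomorphism, so $2[o_\phi]=0$; it thus suffices to prove $[o_\phi]\ne 0$ in $\pi_{m-n}(\mathcal R_{K\ge 0}(V),g)$, for then $[o_\phi]$ has order exactly two.

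\emph{Reduction to a closed manifold.} This is where the new ingredient enters: the plan is to prove, using (i) or (ii), that the subspace $\mathcal C\subset\mathcal R_{K\ge 0}(V)$ of metrics that on a neighborhood of the ends of $V$ are isometric to a Riemannian product over a fixed compact cross-section $N$ is homotopy dense, i.e.\ the inclusion $\mathcal C\hookrightarrow\mathcal R_{K\ge 0}(V)$ is a weak homotopy equivalence. Granting this, $\pi_0$-surjectivity lets me replace $g$ by a path-connected metric in $\mathcal C$ (which changes $[o_\phi]$ only by an isomorphism), and then place the disk supporting $\phi$ in the complementary compact core $V_0$, disjoint from a soul; now $o_\phi$ takes values in $\mathcal C$, since each $\phi_s^*g$ equals $g$ away from the disk, hence near $N$. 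If $o_\phi$ were null-homotopic in $\mathcal R_{K\ge 0}(V)$, then by $\pi_{m-n}$-injectivity it would bound a map $H\co D^{m-n+1}\to\mathcal C$ restricting to $o_\phi$ on the boundary sphere, all of whose values are nonnegatively curved and are Riemannian products near $N$.

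\emph{Doubling and the family index.} Let $\widehat V=V_0\cup_N V_0$, a closed spin $n$-manifold (in case (ii), $V_0$ is the normal disk bundle of the soul via the normal exponential map, which stays a diffeomorphism along the component by hypothesis, so $\widehat V$ is the sphere bundle of $\nu\oplus\R$; in case (i) it is $S^1\times W$). Since every metric in $\mathcal C$ is a Riemannian product near $N$, doubling is a continuous map $\mathcal C\to\mathcal R(\widehat V)$; it sends $H$ to $\widehat H\co D^{m-n+1}\to\mathcal R(\widehat V)$ and $o_\phi$ to the orbit map $s\mapsto\Phi_s^*\widehat g$, where $\widehat g$ is the doubled metric and $\Phi\in\pi_{m-n}(\Diff(\widehat V))$ is $\phi$ made supported in one copy of the disk. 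Each doubled metric is nonnegatively curved --- a product collar glues two nonnegatively curved pieces smoothly --- and has positive scalar curvature at some point of the soul, since the soul is not flat (and non-flatness persists along the component); hence by the Lichnerowicz--Bochner identity its Dirac operator is invertible. So $\widehat H$ is a null-homotopy, through metrics with invertible Dirac operator, of the $\Phi$-twisted orbit map on $\widehat V$; equivalently, the fibre bundle $\widehat V\to E\to S^{m-n+1}$ classified by $\Phi$ carries a fibrewise metric with invertible Dirac operator, so the Atiyah--Singer family index of its fibrewise Dirac operator vanishes in $KO_{m+1}$. But $\widehat V$ admits a metric with invertible Dirac operator --- any double as above --- so the untwisted part of the bundle contributes nothing and the family index equals the contribution of the disk twist, namely $\a(\phi)\ne 0$, a contradiction. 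Hence $[o_\phi]\ne 0$, and $[o_\phi]$ has order exactly two.

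\emph{The hard part.} Once one is on the closed manifold $\widehat V$ the argument is Hitchin's, and the order-two input is quoted; so the real work, and the step I expect to be the main obstacle, is the homotopy density of $\mathcal C$: an arbitrary compact family of complete nonnegatively curved metrics on $V$ must be deformed, through nonnegatively curved metrics, so that every member becomes a Riemannian product near one fixed cross-section. This will use the Cheeger--Gromoll structure of ends --- the soul, total convexity of the sublevel sets of the distance to the soul, and the Sharafutdinov retraction --- together with a parametrised straightening of the metric on the end carried out coherently over the family; hypotheses (i) and (ii) are precisely what keep the cross-section and the radial direction well defined in families (transparently in case (i), where $V=\R\times W$, and via the no-section and exponential-diffeomorphism conditions in case (ii)), while non-flatness of the soul is what finally forces the doubled metrics to carry positive scalar curvature somewhere.
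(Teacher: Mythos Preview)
Your overall strategy matches the paper's, but there is a genuine gap in the doubling step. Your doubling map $\mathcal C\to\mathcal R(\widehat V)$ places $g_y\vert_{V_0}$ on \emph{both} halves of $\widehat V=V_0\cup_N V_0$. Applied to $o_\phi(s)=\phi_s^*g$, this produces the pullback of $\widehat g$ by the diffeomorphism equal to $\phi_s$ on \emph{each} copy of the disk, i.e.\ the orbit map for $\Phi^+ + \Phi^-$, where $\Phi^\pm$ are the two extensions of $\phi$ supported in the two copies---not for the single-copy $\Phi$ you assert. Since the $\a$-invariant is additive and independent of where $D^n$ is embedded in a closed spin manifold, $\a(\Phi^+ + \Phi^-)=2\a(\phi)=0$ in $KO_{m+1}\cong\Z/2$, and your null-homotopy $\widehat H$ yields no contradiction. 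To get the orbit map for a single $\Phi$ one must double asymmetrically (the family on one half, a \emph{fixed} metric on the other), which requires every cross-section metric $b_y=g_y\vert_N$ to coincide with a single one. Your $\mathcal C$ does not ensure this, and arranging it while keeping $K\ge 0$ on the neck would amount to ``isotopy implies concordance'' in the $K\ge 0$ category, which is not available.

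The paper supplies the missing step: after straightening to a product $dr^2+b_y$ near a common hypersurface (essentially your homotopy-density claim, which the paper does prove), it uses the contractibility of the parameter disk to concatenate each $b_y$ with a path back to $b_z$, and then applies Ricci flow to the cross-sections. A theorem of B\"ohm--Wilking says the flow on a closed non-flat manifold of $K\ge 0$ instantly acquires positive scalar curvature, and non-flatness of each $b_y$ follows from non-flatness of the normal sphere bundle to the soul (this, rather than your ``non-flat soul gives scalar curvature somewhere'', is how non-flatness enters). The standard ``isotopy implies concordance'' for positive scalar curvature then builds a long neck interpolating from $dr^2+b_y$ to a fixed $dr^2+b_z^\tau$, after which asymmetric doubling becomes legitimate and the boundary really is the orbit map for a single $\Phi$. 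The price is that the doubled metrics lie only in $\mathcal R_{\mathrm{scal}\gneqq 0}(\widehat D)$ rather than in $\mathcal R_{K\ge 0}(\widehat D)$, but Lichnerowicz still applies. Your observation that a symmetrically doubled $K\ge 0$ metric with non-flat soul already has nonzero scalar curvature is correct, but because of the $2\a(\phi)=0$ issue it cannot close the argument by itself.
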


Results of~\cite{GuiWal-transitive, GroZil-milnor, GroZil-lift} 
and bundle-theoretic considerations
provide a number of examples where Theorem~\ref{thm: intro-main} applies.

\begin{cor} 
\label{cor: intro applications}
Let $L$ be a closed spin positive-dimensional manifold of $K\ge 0$,
and let $V$ be the total space of any of following bundles: \vspace{-1pt}
\begin{itemize}
\item[(1)] the trivial $\R$-bundle over $L$,\vspace{-1pt}
\item[(2)] the tangent bundle to $S^k$ for even $k>0$,\vspace{-1pt}
\item[(3)] any nontrivial $\R^3$-bundle over such $S^4$, $S^5$, $S^7$,\vspace{-1pt}
\item[(4)] any $\R^4$-bundle over $S^4$ with nonzero Euler class,\vspace{-1pt}
\item[(5)] the tautological
$\R$-bundle over $\mathbb{R}\mathrm{P}^{2k}$ for odd $k$,
\vspace{-1pt}
\item[(6)] the tangent bundles to $\cpk$ and $\hpk$ for even $k>0$,\vspace{-1pt}
\item[(7)] the tautological quaternionic line bundle over $\hpk$ with $k>0$,\vspace{-1pt} 
\item[(8)] any nontrivial $\R^2$-bundle over $\cpk$, $S^2\times S^2$, or $\cpk\#\pm \cpk$\vspace{-1pt}
with the same 2nd Stiefel--Whitney as the base, where $k>0$,\vspace{-2pt}
\item[(9)] any non-spin $\R^3$-bundle over $\cptwo$ with no nowhere zero section,\vspace{-2pt}
\item[(10)] 
the product of $L$ and any bundle in \textup{(2)--(9)} above.
\end{itemize}
If $m\ge n=\dim(V)\ge 6$ and $m\equiv 0,1\, (\mathrm{mod}\, 8)$, 
then for any $g\in\mathcal R_{K\ge 0}(V)$ there is an order two element in
$\pi_{m-n}(\mathcal R_{K\ge 0}(V), g)$.
\end{cor}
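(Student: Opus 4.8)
The plan is to check that every path-component of $\mathcal R_{K\ge 0}(V)$ contains a metric satisfying the hypotheses of Theorem~\ref{thm: intro-main}; since a basepoint change inside a path-component is harmless, Theorem~\ref{thm: intro-main} then delivers the asserted order-two element of $\pi_{m-n}(\mathcal R_{K\ge 0}(V),g)$ for an arbitrary $g$, with the numerical restrictions inherited verbatim. So three things must be verified for each $V$ on the list: that $V$ is spin; that every soul is non-flat; and that, component by component, alternative (i) or (ii) of the theorem holds.

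Spin-ness is a Stiefel--Whitney computation along the zero section $B\hookrightarrow V$, where $TV$ restricts to $TB\oplus\xi$ with $\xi$ the defining vector bundle, so $w(V)=\pi^{*}\!\big(w(B)\,w(\xi)\big)$ and $V$ is spin precisely when the degree $\le 2$ part vanishes. This is immediate in (1), in (2) (as $w(S^{k})=1$), and in (6) (where $\xi=TB$ and $w_{1}(B)=0$, so $w(V)=\pi^{*}w(B)^{2}$); it holds in (3), (4), (7) because $H^{2}(B;\Z/2)=0$ or $w_{\le 2}(\xi)=0$ there; in (5) one uses that $k$ is odd to cancel the degree-one and degree-two contributions of $\mathbb{R}\mathrm{P}^{2k}$ against those of the tautological line bundle; and in (8), (9) the imposed equality $w_{2}(\xi)=w_{2}(B)$ is exactly $w_{2}(V)=0$ --- it is because $\cptwo$ is not spin that (9) must use a non-spin $\xi$. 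Finally (10) follows from $w(L\times E)=w(L)\times w(E)$ together with $L$ spin.

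The metrics of $K\ge 0$ are furnished by \cite{GroZil-milnor,GroZil-lift,GuiWal-transitive}: in each case the total space $V$ is known to admit a complete metric of $K\ge 0$ (typically via homogeneous or cohomogeneity-one sphere bundles over spheres and rank-one symmetric spaces), and a Riemannian product of such a space with $L$ is again $K\ge 0$, which gives (10). For non-flatness, note that any soul $S$ of any $g\in\mathcal R_{K\ge 0}(V)$ is homotopy equivalent to $B$: in (2)--(9) the base has trivial or finite fundamental group, so $S$ admits no flat metric by Bieberbach's theorem; and in (1) one takes $L$ non-flat, whereupon the Cheeger--Gromoll splitting theorem together with Bieberbach forces every soul of $\R\times L$ to be non-flat as well. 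This places (1) under alternative (i).

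For (2)--(10) there remains alternative (ii). That the normal sphere bundle of any soul $S$ has no section is bundle theory applied to the normal bundle $\nu$ of $S$, whose section-reduction obstruction is a topological invariant of $V$: it is the self-intersection of the soul --- equal to $\chi(B)\ne 0$ for the tangent bundles in (2) and (6) --- a nonzero Euler-type class for (4), (7) and the twisted $\R^{2}$-bundles of (8), or a nonzero $w_{1}(\nu)$ in (5); and in (3), (9) non-vanishing of the corresponding obstruction is exactly what the hypotheses stipulate. The \textbf{main obstacle} is the remaining clause of alternative (ii): that the normal exponential map to the soul be a diffeomorphism for \emph{every} metric in the path-component. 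Under $K\ge 0$ alone this need not hold (the soul can have focal points), which is why Theorem~\ref{thm: intro-main} imposes it, so here it has to be secured by rigidity properties of souls valid for \emph{all} $K\ge 0$ metrics on these particular $V$'s --- controlling, in effect, the diffeomorphism type of the soul together with the structure of its normal bundle and of the Sharafutdinov retraction. It is precisely the need for this rigidity that forces the shape of the list: $k$ even in (2), (6); Euler-class and section constraints in (3), (4), (8), (9); and tautological bundles in (5), (7).
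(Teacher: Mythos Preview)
Your proposal has a genuine gap: you correctly isolate the ``main obstacle''---that the normal exponential map to the soul must be a diffeomorphism for \emph{every} metric in the path-component---but you never actually prove it. Saying it ``has to be secured by rigidity properties of souls'' and that ``it is precisely the need for this rigidity that forces the shape of the list'' is a description of what is required, not an argument. Nothing in what you wrote explains \emph{why} even a single $K\ge 0$ metric on, say, $TS^{4}$ has $\exp^{\perp}$ a diffeomorphism.

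The paper supplies the missing mechanism in Section~\ref{sec: transitive}, following Guijarro--Walschap~\cite{GuiWal-transitive}. The key observation is that parallel transport along the soul permutes the rays orthogonal to the soul; hence if the normal holonomy group acts \emph{transitively} on the unit normal sphere, then every unit normal vector exponentiates to a ray and $\exp^{\perp}$ is a diffeomorphism. This is condition~(B): every Euclidean connection on the normal bundle has transitive holonomy. The paper then verifies~(B) case by case---mostly via the stronger condition~(A) (that $\pi_q$ of the sphere-bundle projection fails to be surjective, often detected by a nonzero Euler class on the Hurewicz image, Remark~\ref{rmk: euler hurewicz}), via Lemma~\ref{lem: R3 bundles no section} for the $\R^3$-bundles in (3) and (9), and via Lemma~\ref{lem: TCPn THPn} for the tangent bundles in (6). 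Because (A) and (D) are invariant under fiber homotopy equivalence of sphere bundles (Remark~\ref{rmk: share total space}), and because Lemma~\ref{lem: TCPn THPn} is phrased for \emph{any} bundle sharing the total space, these arguments cover the normal bundle of \emph{every} soul of \emph{every} metric on $V$, not just the reference one. Your ``no section'' checks amount to condition~(D), which by itself does not give the exponential-map conclusion; the implication you are missing is $\mathrm{(A)}\Rightarrow\mathrm{(B)}$ (and, for $\R^3$-bundles over simply-connected bases, $\mathrm{(D)}\Rightarrow\mathrm{(B)}$).
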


A key step in the proof is to deform the metric to one with
a cylindrical end. A Riemannian metric has a {\em cylindrical end\,} if the induced metric on the
complement of a compact codimension zero submanifold 
is isometric to the product of a closed manifold and $[0,\infty)$.
Let $\mathcal C_{K\ge 0}(V)$ denote the space of all metrics in $\mathcal R_{K\ge 0}(V)$
that have a cylindrical end. The two spaces of metrics
coincide if $V$ has a metric with codimension one soul.
L.\,Guijarro~\cite{Gui-improv} used a computation in~\cite{Kro} 
to show that any open complete manifold $(V, g)$ of $K\ge 0$ admits a metric $h$ with 
a cylindrical end such that $g=h$ on a tubular neighborhood of the soul. 
If the normal exponential map to the soul is a diffeomorphism, 
one can pick $h$ arbitrary close to $g$. 
A family version of this result holds if the normal bundle
to a soul is sufficiently twisted: 

\begin{thm}
\label{thm: push to C}
Let $V$ be an open $n$-manifold with a complete metric of $K\ge 0$ such that  
the normal sphere bundle to the soul of $(V, g)$ has no section and the normal 
exponential map to the soul is a diffeomorphism for every metric in $\mathcal R_{K\ge 0}(V)$.  
Then there is a homotopy $\r_\e$ of self-maps of
$\mathcal R_{K\ge 0}(V)$ such that $\r_0$ is the identity and 
for all $g\in \mathcal R_{K\ge 0}(V)$ and $\e\in (0, 1]$
the metric $\r_\e(g)$ lies in $\mathcal C_{K\ge 0}(V)$, and 
the $\r_\e(g)$-distance 
from the soul to a cylindrical end is at most $1+\frac{1}{\e}$. 
\end{thm}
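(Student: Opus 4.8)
The plan is to take Guijarro's single-metric construction and make it vary continuously, using the twistedness of the normal bundle to upgrade the qualitative statement "the normal exponential map is onto" to a uniform one. First I would recall the structure theory: for any $g\in\mathcal R_{K\ge 0}(V)$, Sharafutdinov retraction gives a soul $S_g$, and by hypothesis the normal exponential map $\exp^\perp_g\co \nu(S_g)\to V$ is a diffeomorphism, so $(V,g)$ is, metrically, a "twisted ball bundle" over $S_g$ whose fibers are the images of the normal spaces. The key point I would extract from the no-section hypothesis on the normal \emph{sphere} bundle is the following: there is no way to choose a continuous family of points $p_g\in V$ with $p_g$ lying on the soul $S_g$ for each $g$ — wait, more precisely, the absence of a section forces the Sharafutdinov flow to push \emph{every} point of $V$ (not just some) toward the soul, which is exactly what makes Guijarro's cut-off work uniformly. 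Concretely, using Kronwith's computation as in~\cite{Kro, Gui-improv}, for a single $(V,g)$ one writes $g$ on $V\setminus S_g\cong \nu(S_g)\setminus S_g$ in polar-type coordinates $(x,r,\theta)$ with $x\in S_g$, $r\in(0,\infty)$, $\theta$ in the normal sphere, and one modifies $g$ only in the $r$-direction for $r$ large, interpolating to a product $dr^2 + (\text{fixed metric on a level set})$ while preserving $K\ge 0$; the curvature estimate needed is precisely Kronwith's, and it is insensitive to which metric $g$ we started from as long as the normal exponential map is a diffeomorphism.

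Second, I would build the homotopy $\r_\e$. Fix a smooth family of cut-off functions $\chi_\e\co [0,\infty)\to[0,1]$ with $\chi_\e\equiv 1$ near $0$, $\chi_\e$ supported in $[0, 1/\e]$ roughly, interpolating between the given metric near the soul and the cylindrical model past distance $1/\e$. For each $g$, define $\r_\e(g)$ to be the metric obtained by applying the Guijarro--Kronwith modification to $g$ with parameter $\e$: on the region $\{r\le 1\}$ (a fixed tubular neighborhood of the soul, in the $g$-normal-exponential coordinates) leave $g$ unchanged, on $\{1\le r\le 1/\e\}$ interpolate, and on $\{r\ge 1/\e\}$ replace the metric by the product of $[0,\infty)$ with the induced metric on $\{r = 1/\e\}$. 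By construction $\r_0(g)=g$ (take $\chi_0\equiv 1$, no modification), $\r_\e(g)\in\mathcal C_{K\ge 0}(V)$ for $\e\in(0,1]$, and the distance from the soul to the cylindrical end is at most $1 + 1/\e$ (the "$1$" from the untouched tube, the "$1/\e$" from the interpolation region). Continuity of $(g,\e)\mapsto\r_\e(g)$ in the compact-open smooth topology follows because the soul $S_g$, the normal exponential map, and the Sharafutdinov data all depend continuously on $g$ — here is where the "for every metric in $\mathcal R_{K\ge 0}(V)$" hypothesis is used in its full strength, so that no component has a bad soul dimension that would break the fiberwise picture.

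The main obstacle I expect is exactly this continuity/smoothness of $\r_\e$ in $g$: the soul $S_g$ is not canonically a submanifold varying smoothly with $g$ (Sharafutdinov's construction involves choices, and the soul can jump in position though not in diffeomorphism type), so the "$g$-normal-exponential coordinates" in which the modification is performed do not obviously form a continuous family. The fix is to not use the soul directly but to use a continuously-varying codimension-zero submanifold: choose once and for all a fixed compact $K_0\subset V$ containing every soul (possible since all souls are, up to isotopy, the zero section of a fixed bundle structure on $V$, by the no-section hypothesis ruling out the degenerate cases), express the metric outside $K_0$ via a fixed background diffeomorphism $V\setminus K_0\cong N\times[0,\infty)$, and perform the cylindrical modification with respect to \emph{that} fixed product structure rather than the $g$-dependent one; the $K\ge 0$ estimate still goes through by Kronwith because outside a large enough $K_0$ the metric $g$ is already $C^1$-close to a product by the splitting behavior of nonnegatively curved ends. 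Checking that this fixed-coordinate version still lands in $\mathcal C_{K\ge 0}(V)$ and still satisfies the distance bound is the technical heart, and it is where I would spend most of the write-up.
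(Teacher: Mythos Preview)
Your proposal has a genuine gap in the continuity step, and it stems from misreading what the no-section hypothesis buys you. You write that the soul ``is not canonically a submanifold varying smoothly with $g$'' and then propose to fix a single compact $K_0\subset V$ containing every soul. That is impossible: pulling a metric back by a diffeomorphism with arbitrarily large support moves the soul with it, so no fixed compact set catches them all. Likewise the claim that outside a large fixed $K_0$ every $g\in\mathcal R_{K\ge 0}(V)$ is already $C^1$-close to a product is false without some compactness in the space of metrics, which you do not have. The actual role of the no-section hypothesis is precisely to resolve the issue you flagged: it forces the soul to be \emph{unique} for each $g$ and to \emph{depend continuously} on $g$ (this is \cite[Theorem~2.1]{BFK}, cited in the paper). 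With that in hand one can, and must, work in $g$-dependent normal exponential coordinates around $S_g$; the continuity of $(g,\e)\mapsto\r_\e(g)$ then follows because every ingredient varies continuously. Your first paragraph also garbles this point: Sharafutdinov retraction always pushes every point to the soul; the no-section condition is not about that.

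Separately, the paper's construction is not the direct Guijarro--Kronwith metric interpolation you sketch. Instead it realizes $\r_\e(g)$ as the induced metric on a convex hypersurface $H_r\subset (V\times\R,\,g+dt^2)$ shaped like a ``drinking glass'': the graph of a convex function of the distance to the $1$-neighborhood of the soul, capped by the cylinder $\partial N_r\times[1,\infty)$, then pulled back to $V$ by an explicit diffeomorphism. Nonnegative curvature comes for free from the Gauss equation, avoiding any curvature computation on an interpolated metric; and the diffeomorphism is the identity on the $(r-1)$-neighborhood of the soul, so as $r=1/\e\to\infty$ the pullback converges to $g$ on compacta, which (together with continuous dependence of $S_g$ on $g$) gives the continuity of the homotopy. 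Your interpolation idea is in the right spirit but would still owe a $K\ge 0$ verification for the family, and that is exactly what the hypersurface trick sidesteps.
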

Thus the homotopy $\r_\e$ instantly pushes $\mathcal R_{K\ge 0}(V)$ 
into $\mathcal C_{K\ge 0}(V)$, in which case one says that the latter
is {\em homotopy dense\,} in the former.
That ``the normal sphere bundle to the soul has no section''
ensures that the soul is uniquely determined by the metric, and
also depends continuously on the metric, see~\cite[Theorem 2.1]{BFK}, 
which leads to continuity of $\r_\e$. 

Let us sketch the proof of Theorem~\ref{thm: intro-main}. 
Fix an embedded $n$-disk in $M$ inside the \mbox{$1$-neighborhood} of the soul of $g$.
Start with
an order two element in the $k$th homotopy group of $\Diff (D^n, \d)$
that is detected by the $\a$-invariant. Represent the element by a map from the $k$-sphere,
and push it to $\mathcal R_{K\ge 0}(V)$ via the \mbox{$\Diff (D^n, \d)$-orbit} map through $g$. 
Suppose arguing by contradiction that this singular $k$-sphere contracts in 
$\mathcal R_{K\ge 0}(V)$, i.e., extends to a map from $D^{k+1}$ 
giving a family of metrics $g_y$ parametrized by $y\in D^{k+1}$.
Applying Theorem~\ref{thm: push to C} we can arrange all metrics
to have cylindrical ends that are within a definite distance to their souls.
For $r\gg 1$ we get a family of $r$-neighborhoods $N_y$ of the souls of $g_y$
such that $\d N_y$ lie in the cylindrical end of $N_y$, and in fact,
$\d N_y$ will be a cylinder cross-section, so that the induced metric on $\d N_y$
has $K\ge 0$. 
Contractibility of the disk allows to find the ambient isotopies $\phi_y$
with compact support moving $N_g$ to $N_y$. A further 
ambient isotopy arranges that the pullback
metrics $\phi_y^* g_y$ have the same product structure near $\d N_g$,
i.e., they are of the form $dr^2+b_y$ where $r$ is the distance to $\d N_g$.
We refer to the cylindrical region outside $N_g$ as the ``neck''.
We wish to modify the metric on the neck to make it of positive scalar curvature.
A result of C.~B\"ohm and B.~Wilking~\cite{BohWil} implies
that the Ricci flow on a closed non-flat manifold 
instantly turns a metric of $K\ge 0$ into a metric of positive scalar curvature. 
Applying Ricci flow to $b_y$ with running time depending of $r$ we get
a family of metrics that agrees with $dr^2+b_y$ for small $r$ and then has positive
Ricci curvature on the fibers of $r$. Since isotopy implies concordance
in the positive scalar curvature category, we can reparametrize the metric
to make it of positive scalar curvature on the neck.
Choosing the neck long enough we can arrange that its union $U_g$ with $N_g$ 
contains the support of every $\phi_y$. Pulling back the metrics via $\phi_y^{-1}$ we 
get a family of metrics on $U_g$ of nonnegative scalar curvature that is not identically zero.
Near $\d U_g$ the metrics are all equal, so they extend to the double of $U_g$. 
The result is a family of metrics parametrized by $y$ that has 
nonnegative scalar curvature that is not identically zero.
It is easy to see that double is spin if and only if $V$ is spin,
so the double has invertible Dirac operator. Since the given singular $k$-sphere 
has nonzero $\a$-invariant, it cannot contract through such metrics,
which is the desired contradiction. 

Let us briefly review
the previous works on topological properties of $\mathcal R_{K\ge 0}(V)$.
The results in~\cite{KPT, BKS-mod1, BKS-mod2, Ott-pairs, Ott-triv-norm, DKT, GAZ}
give various examples where $\mathcal R_{K\ge 0}(V)$, 
or even its $\Diff(V)$-quotient $\mathcal M_{K\ge 0}(V)$,
has infinitely many path-components. 
The geometric ingredient is that metrics in the same path-component 
have ambiently isotopic (and hence diffeomorphic) souls. 
Examples where $\mathcal R_{K\ge 0}(V)$
is shown to have finitely many nonzero rational higher homotopy groups  are given in~\cite{BFK}.
By contrast, Theorem~\ref{thm: intro-main} yields order two elements
of homotopy groups in infinitely many degrees.
Results in~\cite{TusWie} consider the case when $V$ has a
codimension one soul with a torus factor,
and uses it to give examples where $\mathcal M_{K\ge 0}(V)$ has nontrivial rational
homotopy and cohomology. Finally,~\cite{BelHu, BelHu-err, BanBel} determines the homeomorphism type 
of $\mathcal R_{K\ge 0}(\R^2)$, and also studies connectedness properties
of $\mathcal M_{K\ge 0}(\R^2)$.
There is a substantial recent literature on deformations of metrics 
subject to other curvature conditions, such as positive scalar curvature,
and as a starting point
the reader could consult~\cite{FT-ober}.

{\bf Convention:} in this paper {\em smooth\,} means $C^\infty$, and any set of 
diffeomorphisms, embeddings, submanifolds, or 
Riemannian metrics is equipped with the compact-open smooth topology. 

{\bf Acknowledgments:} I am grateful to Boris Botvinnik for leading me to~\cite{PetYun}.

{\bf Structure of the paper.} 
Section~\ref{sec: transitive} builds on results in~\cite{GuiWal-transitive}  
related to the condition (ii) of Theorem~\ref{thm: intro-main}, and needed
for Corollary~\ref{cor: intro applications};
in fact, the topic of Section~\ref{sec: transitive} may be of independent interest. 
Section~\ref{sec: family of hypersurfaces} contains a proof of Theorem~\ref{thm: push to C}.
The other results stated in the introduction are proved in Section~\ref{sec: main}.

\section{The normal exponential map to a soul is a diffeomorphism}
\label{sec: transitive}

We seek a topological condition on an open connected manifold $V$ ensuring that 
the normal exponential map $\exp^\bot\co\nu_S\to V$ to the soul $S$
of each metric in $\mathcal{R}_{K\ge 0}(V)$ is a diffeomorphism. 
A simple example of such condition is $H_{n-1}(V;\Z_2)\neq 0$, 
i.e., every soul have codimension one, which
by the splitting theorem~\cite{CheGro-soul} ensures that $\exp^\bot$
is a diffeomorphism.  
Another such condition was discovered by 
L.~Guijarro and G.~Walschap in~\cite{GuiWal-transitive}, and here we elaborate
and extend their work. 

By a standard argument the normal exponential map to a closed submanifold is a diffeomorphism
if and only if it each unit vector that is normal to the submanifold
exponentiates to a ray.
Since $V$ is open, each $x\in S$ is a starting point of a ray,
and since $S$ is a soul, the ray must be orthogonal to 
$S$~\cite[Theorem 5.1(3)]{CheGro-soul}.
The parallel transport along $S$ preserves the set of 
rays orthogonal to $S$~\cite[Lemma 1.1]{Wal-cd2}, and in particular,
the rays from $x$ are permuted
by the corresponding normal holonomy group $G_x$ of $\nu_S$ at $x$. Thus
{\em if $G_x$-action on the normal unit sphere at $x$ is transitive, then 
the normal exponential map is a diffeomorphism.}

At this point let us switch to the following more general setting.
Let $\xi$ be a smooth Euclidean vector bundle over a closed connected manifold $B$
(in fact, many results of this section also hold if $B$ is non-compact).
Let $p\co S(\xi)\to B$ be the unit sphere bundle. 
Equip $\xi$ with a metric connection, i.e., an (Ehresmann) connection whose parallel transport preserves
the Euclidean inner product, and let $G$ be the holonomy
group of the connection at $b\in B$. Then $G$ is a (possibly non-closed)
subgroup in the isometry group of the sphere $F=p^{-1}(b)$, which is isomorphic to $O(k)$
where $k$ is the dimension of the fibers of $\xi$. 
The identity component $G_0$ of $G$ is a closed subgroup of $O(k)$, and
the parallel transport defines a surjective homomorphism $\pi_1(B)\to G/G_0$,
so $G/G_0$ is countable~\cite[Theorem 4.2 in Chapter II]{KN-book}.

We say that 
{\em the holonomy of $\xi$ is transitive}
if the $G$-action on $F$ is transitive.

Consider the following conditions on $\xi$:
\begin{enumerate}
\item[(A)] $\pi_q(p)\co \pi_q(S(\xi))\to\pi_q(B)$ is not surjective for some $q\ge 1$,
\item[(B)] the holonomy of every metric connection on $\xi$ is transitive, 
\item[(C)]  $\xi$ does not split as a Whitney sum (of positive rank bundles),
\item[(D)]  $p$ has no section.
\end{enumerate}

The condition (B) is most relevant to this paper, while (D) is the easiest to check.
In favorable situations (A)--(D) are all equivalent, and in general, the conditions
help to isolate the cases where (B) hold. 
The following is contained in~\cite{GuiWal-transitive}:

\begin{thm} 
\label{thm: implications}
The implications  
$\mathrm{(A)}\Rightarrow\mathrm{(B)}
\Rightarrow\mathrm{(C)}\Rightarrow\mathrm{(D)}$ hold.
\end{thm}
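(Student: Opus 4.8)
The plan is to prove the chain by establishing each link separately, working from the easiest implication to the hardest. I would start with (C)⇒(D): if $p\co S(\xi)\to B$ has a section $s$, then $s$ picks out a unit-length section of $\xi$, hence a trivial line subbundle $\ell\subset\xi$; taking the orthogonal complement with respect to the Euclidean metric gives $\xi\cong\ell\oplus\ell^{\perp}$, a Whitney sum of positive-rank bundles. So the contrapositive of (C)⇒(D) is immediate. Next, for (B)⇒(C): suppose $\xi=\eta_1\oplus\eta_2$ with both summands of positive rank. Equip each $\eta_i$ with a metric connection and form the direct-sum connection on $\xi$; its parallel transport preserves the splitting of each fiber $F_b=S^{k_1}*S^{k_2}$-type decomposition, i.e. it preserves the sub-sphere $S(\eta_1)_b$ of the fiber sphere $F$. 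Hence the holonomy group $G$ of this particular metric connection fixes a proper nonempty sub-sphere setwise and cannot act transitively on $F$, so (B) fails. (Equivalently: the unit-sphere bundle of a Whitney sum always admits fiberwise $G$-invariant subsets for the product connection.)

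The real content is (A)⇒(B). Here I would argue by contraposition: assume (B) fails, so there is a metric connection on $\xi$ whose holonomy group $G\subset O(k)=\Iso(F)$ acts non-transitively on the fiber sphere $F$, and I want to produce, for every $q\ge 1$, a surjection $\pi_q(S(\xi))\to\pi_q(B)$, i.e. deny (A). The key geometric fact is that parallel transport of the given metric connection gives $S(\xi)$ the structure of a fiber bundle with structure group $G$ (or its closure $\bar G$) and a $G$-invariant metric on $F$; since $G$ acts non-transitively, the orbit of a point $v_0\in F$ is a proper closed submanifold $G\cdot v_0\subsetneq F$. The plan is to use $v_0$ to build a section-like map: parallel-transporting $v_0$ along loops is not well-defined globally, but the associated-bundle picture shows that $S(\xi)$ contains the subbundle with fiber the orbit $\bar G\cdot v_0$, and more importantly, the long exact sequence of the fibration $F\hookrightarrow S(\xi)\to B$ together with non-transitivity lets one split off a summand. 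Concretely, choosing $v_0$ in the interior of a fundamental domain, the point $v_0$ has a $G$-invariant ``retraction data'' — a fixed nonempty proper orbit — which yields a continuous section of $S(\xi)$ over the $2$-skeleton and, by obstruction theory using that the fiber inclusion is split in homotopy above the orbit, a section of a fiber homotopy retract of $p$. A section of $p$ (or even of a bundle homotopy-equivalent retract) forces $\pi_q(p)$ to be surjective for all $q\ge 1$. So the cleanest route is: non-transitive holonomy $\Rightarrow$ the $G$-orbit structure gives a fiberwise deformation retract of $F$ onto a space admitting a $G$-fixed point in an appropriate sense $\Rightarrow$ $p$ admits a section up to fiber homotopy $\Rightarrow$ $\pi_q(p)$ surjective, contradicting (A).

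The main obstacle I anticipate is making the last implication precise: non-transitivity of $G$ on $F$ does not by itself produce a genuine $G$-fixed point in $F$ (the action could be, say, transitive on each of two disjoint spheres), so one cannot literally get a section of $p$. The fix — and the technical heart of (A)⇒(B) — is to pass to the \emph{base of the orbit space}: the quotient $F/G$ is a nontrivial space (a single point would mean transitivity), and one shows that the bundle $S(\xi)\to B$ fiber-bundle-maps onto a fiber bundle with fiber $F/\bar G$; picking a point of $F/\bar G$ that lifts back to a $\bar G$-invariant subset gives the needed partial section after a small obstruction-theory argument, exploiting that $B$ is a closed manifold (hence a finite CW complex) so obstructions live in finitely many cohomology groups with coefficients in the homotopy of a now \emph{disconnected or lower-dimensional} fiber. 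Assembling this carefully — and checking that ``section up to fiber homotopy'' genuinely implies surjectivity of $\pi_q(p)$ on the nose — is where the care is needed; everything else is formal. Since the theorem is attributed to Guijarro–Walschap, I would follow the structure of~\cite{GuiWal-transitive} for this step and only streamline the exposition.
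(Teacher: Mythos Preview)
Your treatment of $(\mathrm{B})\Rightarrow(\mathrm{C})$ and $(\mathrm{C})\Rightarrow(\mathrm{D})$ is correct and is exactly what the paper does. For $(\mathrm{A})\Rightarrow(\mathrm{B})$ the paper, like your final fallback, simply cites~\cite{GuiWal-transitive}, so in the end your proposal matches the paper's proof.

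However, the independent argument you sketch for $(\mathrm{A})\Rightarrow(\mathrm{B})$ before falling back on the citation is headed in the wrong direction and could not be made to work. You try to deduce from non-transitive holonomy a ``section up to fiber homotopy'' of $p$ itself. For a fibration over a CW complex a homotopy section is the same as a genuine section, so your route would establish $\neg(\mathrm{B})\Rightarrow\neg(\mathrm{D})$, i.e.\ $(\mathrm{D})\Rightarrow(\mathrm{B})$. The paper later gives explicit counterexamples to that implication (the lens-space example shows $(\mathrm{C})\centernot\Rightarrow(\mathrm{B})$, hence a fortiori $(\mathrm{D})\centernot\Rightarrow(\mathrm{B})$). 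The obstruction-theory patch via $F/\bar G$ does not rescue this: knowing that a subbundle with fiber a single $\bar G$-orbit sits inside $S(\xi)$ gives no control over the section obstructions in $H^{i+1}(B;\pi_i(\text{orbit}))$, and ``lower-dimensional fiber'' is irrelevant to whether those groups vanish.

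The actual mechanism in~\cite{GuiWal-transitive} works one sphere at a time rather than globally over $B$: a class $[\a]\in\pi_q(B)$ lies in the image of $\pi_q(p)$ iff the pullback $\a^*S(\xi)\to S^q$ has a section (cf.\ Remark~\ref{rmk: euler hurewicz}). The pullback connection on $\a^*\xi$ has holonomy contained in $G$, and for $q\ge 2$ (so $S^{q-1}$ is connected) the associated clutching map lands in a single coset $g_0G_0$. Hence the connecting homomorphism $\partial\co\pi_q(B)\to\pi_{q-1}(F)$ sends $[\a]$ to a map whose image lies in the single translated orbit $g_0G_0\cdot v_0$, a proper \emph{closed} subset of $F=S^{k-1}$, which therefore sits inside the contractible set $S^{k-1}\setminus\{\text{pt}\}$. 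Thus $\partial=0$ and $\pi_q(p)$ is onto; the case $q=1$ is immediate since $\pi_0(S^{k-1})=0$ for $k\ge 2$. No global section of $p$ is ever produced, and none is needed.
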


\begin{proof}
The contrapositives of 
$\mathrm{(B)}\Rightarrow\mathrm{(C)}\Rightarrow\mathrm{(D)}$ are immediate: 
If $p$ has a section, then its span is a one-dimensional 
subbundle which together with its orthogonal complement gives a Whitney sum decomposition for $\xi$, and
if $\xi$ is a nontrivial Whitney sum, then any connections on the summands gives rise to a connection
on the Whitney sum whose holonomy violates (B). 
The implication $\mathrm{(A)}\Rightarrow\mathrm{(B)}$
is stated in~\cite[page 253]{GuiWal-transitive} when $q\ge 2$ and $\xi$ is orientable
but these assumptions are never used in the proof.
\end{proof}

\begin{rmk} 
\label{rmk: share total space} 
If (D) holds for a normal bundle
to a soul, then the soul is uniquely determined by the metric~\cite[Section 2]{BFK}.
The souls for different metrics need not even be diffeomorphic, but their normal sphere bundles
are fiber homotopy equivalent, and more generally,
if $\xi$, $\eta$ are smooth vector bundles over closed manifolds
with the same total space, then their
normal sphere bundles are fiber homotopy equivalent over the canonical homotopy equivalence
of their zero sections~\cite[Proposition 4.1]{BKS-mod1}.
Clearly, (A) and (D) are preserved under such fiber homotopy equivalences; 
I do not know if this holds for (B) or (C).  
\end{rmk}

\begin{lem} 
\label{lem: pullback}
The negation of each of the conditions \rm{(A)--(D)}\em\ is inherited 
by the pullback via a smooth map $f$ of base manifolds, i.e., if $\xi$ does
not satisfy the condition, then neither does $f^*\xi$.
\end{lem}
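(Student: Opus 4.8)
The plan is to handle the four conditions one at a time, in each case passing to the pullback bundle $f^*\xi$ over the domain of $f\co B'\to B$ and showing that if $\xi$ fails the condition then so does $f^*\xi$; equivalently, that a witness to the negation of the condition on $\xi$ pulls back to a witness on $f^*\xi$.

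First, for condition (D): if $p\co S(\xi)\to B$ has a section $s$, then $b'\mapsto$ (the point of $f^*\xi_{b'}$ corresponding to $s(f(b'))$ under the canonical fiberwise isomorphism $f^*\xi_{b'}\cong\xi_{f(b')}$) is a section of the unit sphere bundle of $f^*\xi$, so (D) fails for $f^*\xi$. Next, for condition (C): a Whitney splitting $\xi\cong\xi_1\oplus\xi_2$ with $\xi_1,\xi_2$ of positive rank pulls back to $f^*\xi\cong f^*\xi_1\oplus f^*\xi_2$, and pullback preserves rank, so $f^*\xi$ splits nontrivially and (C) fails. For condition (B): if there is a metric connection $\nabla$ on $\xi$ whose holonomy group $G$ at some $b\in B$ acts non-transitively on the fiber sphere $F_b$, then the pullback connection $f^*\nabla$ on $f^*\xi$ is again metric, and for any $b'$ with $f(b')=b$ the holonomy group of $f^*\nabla$ at $b'$ is contained in $G$ (loops at $b'$ map under $f$ to loops at $b$, and parallel transport along $f\circ\g$ for $f^*\nabla$ is identified with parallel transport along $\g$ for $\nabla$); a subgroup of a non-transitive group acting on the same sphere is still non-transitive, so (B) fails for $f^*\xi$. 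Finally, for condition (A): if $\pi_q(p)\co\pi_q(S(\xi))\to\pi_q(B)$ is surjective for all $q\ge 1$, we must show the same for $f^*\xi$; here one uses that $S(f^*\xi)$ is the pullback $B'\times_B S(\xi)$ and the naturality square
\[
\begin{CD}
S(f^*\xi) @>\tilde f>> S(\xi)\\
@Vp'VV @VVpV\\
B' @>f>> B
\end{CD}
\]
commutes, so $\pi_q(p')$ followed by $f_*$ equals $\tilde f_*$ followed by $\pi_q(p)$. Given $\a\in\pi_q(B')$, the element $f_*\a\in\pi_q(B)$ lifts through $\pi_q(p)$ to some $\b\in\pi_q(S(\xi))$; one then has to lift $\b$ and $\a$ simultaneously to $\pi_q(S(f^*\xi))$, which is exactly the universal property of the pullback at the level of homotopy — equivalently, $\pi_q(p')$ is surjective because the square is a (homotopy) pullback of fibrations and a section over $B$ of $p$ restricted along $f$ gives a section over $B'$ of $p'$ up to the relevant homotopy. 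The cleanest formulation: compare the long exact sequences of the fibrations $F\to S(\xi)\to B$ and $F\to S(f^*\xi)\to B'$ using that $f^*\xi$ has the same fiber $F$, and chase the diagram; surjectivity of $\pi_q(p)$ for all $q$ forces $\pi_q(F)\to\pi_q(S(\xi))$ to be zero for all $q\ge 0$, hence also $\pi_q(F)\to\pi_q(S(f^*\xi))$ — no, one cannot conclude that directly, so I would instead argue via the pullback square and the five lemma applied to the map of long exact sequences.

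I expect the only real subtlety to be condition (A): for (B), (C), (D) the negation is witnessed by an honest algebraic/geometric structure (a section, a splitting, a connection) that pulls back transparently, whereas for (A) the negation is a statement about \emph{all} $q$ of a map of homotopy groups, and one must argue naturality of the homotopy sequence of a sphere bundle under pullback. The key point to get right is that $S(f^*\xi)\to S(\xi)$ over $f$ is a fiber-preserving map inducing the identity on fibers, so in the map of fibration long exact sequences the "fiber" columns are isomorphisms; then surjectivity of $\pi_q(p)$ for all $q$ is equivalent to $\pi_q(F)\hookrightarrow\pi_q(S(\xi))$ being surjective for all $q$ — actually equivalent to the connecting maps $\pi_q(B)\to\pi_{q-1}(F)$ being zero — and this property transfers to $f^*\xi$ because the connecting map for $f^*\xi$ is the composite $\pi_q(B')\xrightarrow{f_*}\pi_q(B)\to\pi_{q-1}(F)$, which is zero as soon as the second arrow is. That last observation is really the heart of the matter and makes (A) just as easy as the rest.
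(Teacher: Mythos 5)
Your proof is correct. For (B), (C), (D) it coincides in substance with the paper's: a section, a Whitney splitting, and a metric connection all pull back transparently, and the holonomy group of a pullback connection at $b'$ is identified with a subgroup of the holonomy of the original connection at $f(b')$ (parallel transport around $\gamma$ corresponds to parallel transport around $f\circ\gamma$), so non-transitivity persists for any subgroup. The genuine difference is in (A). The paper treats (A), (C), (D) uniformly via the mapping cylinder: it replaces $f$ by the deformation retraction $r\co M_f\to B$, pushes the negated condition along the homotopy equivalence $r$ to $r^*\xi$, and then restricts from $M_f$ to $B'$. Your argument for (A) is instead direct: by naturality of the long exact homotopy sequence of the sphere fibration, the connecting homomorphism $\partial'\co\pi_q(B')\to\pi_{q-1}(F)$ for $f^*\xi$ factors as $\partial\circ f_*$, and surjectivity of $\pi_q(p)$ for all $q$ is equivalent to $\partial=0$ for all $q$, which forces $\partial'=0$ and hence surjectivity of $\pi_q(p')$. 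This is tidy and self-contained; the paper's mapping-cylinder route is shorter on the page but still leaves the reader to check that the negation of (A) survives restriction to a subspace, which requires exactly the same connecting-map observation (or the section criterion of Remark~\ref{rmk: euler hurewicz}). One stylistic remark: the middle of your (A) discussion meanders through several false starts (the attempted simultaneous lift of $\alpha$ and $\beta$, the aside about $\pi_q(F)\to\pi_q(S(\xi))$, the five-lemma suggestion) before reaching the connecting-map argument; in a final write-up go straight to that last paragraph, which is all you need.
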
 
\begin{proof}
Think of $f\co B^\prime\to B$ as the restriction of the deformation retraction $r$
of the mapping cylinder of $f$ onto $B$. If (A), (C), or (D) fails for $\xi$,
then it does for the pullback $r^*\xi$, and hence for its restriction to $B^\prime$.
If (B) fails for a connection on $\xi$, consider the pullback connection on $f^*\xi$.
For any two unit vectors in a fiber of $f^*\xi$ that are translates of
each other along a loop $\g$, their images in $\xi$ are translates of each other
along a loop $f\circ\g$.
\end{proof}

\begin{ex}
\label{ex: product times L}
For a (closed connected smooth) manifold $L$
consider the bundle $L\times\xi$ whose projection is the product of
the identity map of $L$ and the projection of $\xi$.
Then $\xi$ satisfies one of the conditions (A)--(D) if and only is so does $L\times\xi$.
because the bundles are pullbacks of each other.
\end{ex}

A partial converse of Lemma~\ref{lem: pullback} for the condition (A) 
comes from the following observation: 
{\em If both $f$ and the projection of the unit sphere bundle 
of $f^*\xi$ are $\pi_q$-surjective,
then so is $p$.} This is immediate from the map of the homotopy exact sequences 
of fiber bundles induced by $f$.

\begin{cor}
\label{cor: pullback under cover}
If $f\co B^\prime\to B$ is a covering map and $k\ge 2$, then
$\xi$ satisfies {\rm (A)} if and only if so does $f^*\xi$. 
If $f\co B^\prime\to B$ is a torus bundle and $k\ge 3$, then
$\xi$ satisfies {\rm (A)} if and only if so does $f^*\xi$. 
\end{cor}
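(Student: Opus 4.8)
The plan is to deduce both statements from the partial converse of Lemma~\ref{lem: pullback} that is recorded immediately before the corollary, combined with Lemma~\ref{lem: pullback} itself. By Lemma~\ref{lem: pullback}, if $\xi$ fails (A) then $f^*\xi$ fails (A); equivalently, if $f^*\xi$ satisfies (A) then so does $\xi$. This direction requires nothing about $f$ beyond smoothness. So in each case only the reverse implication needs work: assuming $\xi$ satisfies (A), I want to conclude $f^*\xi$ does. The contrapositive form of the observation says: if $p$ fails to be $\pi_q$-surjective but $f$ \emph{is} $\pi_q$-surjective, then the projection $p'\co S(f^*\xi)\to B'$ cannot be $\pi_q$-surjective either (because $\pi_q$-surjectivity of $f$ and of $p'$ would force $\pi_q$-surjectivity of $p=f\circ(\text{something})$ up to the bundle map, via the ladder of homotopy exact sequences). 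Thus the whole argument reduces to showing that the relevant $f$ is $\pi_q$-surjective in the range of $q$ that matters for condition (A) on $f^*\xi$.

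For the covering-map case: suppose $\xi$ satisfies (A), so $p$ is not $\pi_q$-surjective for some $q\ge 1$. I first note that $q=1$ can be excluded when $k\ge 2$: the fiber $S^{k-1}$ of $p$ is connected and simply connected when $k\ge 3$, forcing $\pi_1$-surjectivity of $p$ from the exact sequence, and when $k=2$ the fiber is $S^1$ and a short separate check (or the remark that (A) for $q=1$ with $k=2$ would already be visible on $\pi_1$ and is inherited under finite and infinite covers since covering maps are $\pi_q$-isomorphisms for $q\ge 2$ and $\pi_1$-injective) handles it. Actually the clean route: a covering map $f$ is $\pi_q$-surjective for every $q\ge 2$ and is injective on $\pi_1$. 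So if $p$ fails $\pi_q$-surjectivity for some $q\ge 2$, the observation applies with the same $q$ and gives that $p'$ fails $\pi_q$-surjectivity, so $f^*\xi$ satisfies (A). The only gap is $q=1$; here I would argue that for $k\ge 2$ condition (A) at $q=1$ can always be upgraded to some $q\ge 2$, or alternatively handle $\pi_1$ directly using that $\pi_1(S(f^*\xi))=$ the preimage subgroup and chasing the commutative square of $\pi_1$'s. This $q=1$ bookkeeping is the one fiddly point, but it is elementary.

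For the torus-bundle case: here $f\co B'\to B$ has fiber a torus $T^d$, so from the homotopy exact sequence of $f$ one sees $f$ is $\pi_q$-surjective for every $q\ge 2$ (since $\pi_q(T^d)=0$ for $q\ge 2$), and $f$ is $\pi_1$-surjective as well (it is a fibration over a connected base, or one uses that $T^d$-bundles are orientable fiber bundles with $\pi_1$-surjective projection). So $f$ is $\pi_q$-surjective for \emph{all} $q\ge 1$. Then for any $q\ge 1$ for which $p$ fails $\pi_q$-surjectivity, the observation immediately yields that $p'$ fails $\pi_q$-surjectivity, hence $f^*\xi$ satisfies (A). The hypothesis $k\ge 3$ is what makes the fiber $S^{k-1}$ simply connected, which is used to ensure that the instance of condition (A) for $f^*\xi$ produced this way is a genuine one (the definition of (A) quantifies over $q\ge 1$, so actually $k\ge 3$ may not even be strictly needed for this direction, but it is harmless to keep it; the asymmetry with the covering case is that there $f$ fails to be $\pi_1$-surjective in general, forcing the extra care).

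The main obstacle, such as it is, is purely the low-degree ($q=1$) bookkeeping in the covering-map statement: one must make sure that the failure of $\pi_q$-surjectivity that witnesses (A) can be taken in a degree where $f$ is surjective on $\pi_q$, and equivalently that the witness transported to $f^*\xi$ lands in the allowed range $q\ge 1$. Everything else is a one-line diagram chase in the ladder of long exact homotopy sequences of the two sphere bundles connected by the bundle map $S(f^*\xi)\to S(\xi)$ covering $f$, using the already-stated observation and Lemma~\ref{lem: pullback}.
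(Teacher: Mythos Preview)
Your overall strategy matches the paper's, and the covering-map case is essentially right once you clean up the $q=1$ discussion: for $k\ge 2$ the fiber $S^{k-1}$ is \emph{connected}, so the tail $\pi_1(S(\xi))\to\pi_1(B)\to\pi_0(S^{k-1})=0$ of the homotopy exact sequence already forces $p$ to be $\pi_1$-surjective. There is no need to treat $k=2$ separately; simple connectivity of the fiber is irrelevant here. Thus any witness $q$ for (A) on $\xi$ automatically satisfies $q\ge 2$, and since covering maps are $\pi_q$-isomorphisms for $q\ge 2$, the observation gives the result.

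The torus-bundle case, however, contains a genuine error. You assert that a torus-bundle projection $f\co B'\to B$ is $\pi_q$-surjective for every $q\ge 2$ because $\pi_q(T^d)=0$ for $q\ge 2$. This is false for $q=2$: the relevant piece of the exact sequence is
\[
\pi_2(B')\xrightarrow{\ \pi_2(f)\ }\pi_2(B)\longrightarrow \pi_1(T^d)\cong\Z^d,
\]
and the boundary map need not vanish. The Hopf fibration $S^1\to S^3\to S^2$ is the standard counterexample: $\pi_2(S^3)=0$ while $\pi_2(S^2)\cong\Z$. This is exactly why the hypothesis $k\ge 3$ is not ``harmless to keep'' but essential (and the paper's very next example shows the bound is sharp: $TS^2$ satisfies (A) but its Hopf pullback does not). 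The fix is the one the paper uses: for $k\ge 3$ the fiber $S^{k-1}$ is simply connected, so $p$ is surjective on $\pi_1$ \emph{and} $\pi_2$, hence any witness $q$ for (A) on $\xi$ satisfies $q\ge 3$; and for $q\ge 3$ the torus-bundle projection $f$ \emph{is} a $\pi_q$-isomorphism, so the observation applies.
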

\begin{proof}
For both claims the ``if'' direction follows by Lemma~\ref{lem: pullback}.
The converse of the first statement is true because $p$ is $\pi_1$-surjective as $k\ge 2$ 
so by assumption $p$ is not $\pi_q$-surjective for some $q\ge 2$, but
$f$ is a $\pi_q$-isomorphism for all $q\ge 2$.
Similarly, for the second statement from $k\ge 3$ we conclude that
$p$ is surjectve on $\pi_1$ and $\pi_2$, so by assumption 
$p$ is not $\pi_q$-surjective for some $q\ge 3$, but
$f$ is a $\pi_q$-isomorphism for all $q\ge 3$.
\end{proof}

\begin{ex}
If $V$ is an open complete manifold of $K\ge 0$, then
any path-component of $\mathcal{R}_{K\ge 0}(V)$ contains a metric
whose pullback to a finite Galois cover splits 
as the Riemannian product
of a torus and an open simply-connected complete  manifold $N$ of $K\ge 0$,
see~\cite[Corollary 6.3]{Wil-fund-gr}.
Now Lemma~\ref{cor: pullback under cover}, Example~\ref{ex: product times L}, 
and Remark~\ref{rmk: share total space}
imply that the normal bundle to a soul of $V$
satisfies (A) if and only if so does the normal bundle to a soul
of $N$. Thus in principle, {\em verifying} (A) {\em for normal bundles to souls reduces 
to the case of simply-connected manifolds of $K\ge 0$}. The caveat is that
in some cases $\xi$ may be easier to understand than $N$.
\end{ex}

\begin{ex}
The bound $k\ge 3$ in Corollary~\ref{cor: pullback under cover} 
cannot be improved: $TS^2$ satisfies (A) while its pullback under the Hopf fibration
$S^3\to S^2$ does not.
\end{ex}

\begin{ex}
\label{ex: 2-fold}
If $\xi$ is a nontrivial $\R$-bundle, then $\xi$ satisfies (A):
the nontrivial two-fold-cover $p$ is not $\pi_1$-surjective.
Of course, (D) fails for the trivial $\R$-bundle.
\end{ex}

\begin{rmk}
\label{rmk: euler hurewicz}
In many cases verifying (A) 
hinges on the following observation 
in~\cite{GuiWal-transitive}: 
the homotopy class of a map $f\co S^q\to B$
lies in the image of the $\pi_q(p)\co\pi_q(S(\xi))\to\pi_q(B)$ if and only if 
the pullback of the sphere bundle $p$ via $f$ has a section. 
Since every sphere bundle with a section has zero Euler class, we conclude:
\begin{center}
Condition (A) holds if the Euler class of $\xi$ (with $\Z$ or $\Z_2$ coefficients)\\
is nonzero on the image of the Hurewicz homomorphism.
\end{center}
Note that for an $(m-1)$-connected cell complex with $m\ge 2$ the Hurewicz homomorphism
is an isomorphism in degree $m$ and a surjection in degree 
$m+1$~\cite[Theorem 4.37, and Exercise 23 in section 4.2]{Hat-book}.
In~\cite[Theorem 1.4]{GSW} one finds the following version of the above:
(A) holds if the rational Euler class is nonzero,
$B$ is simply-connected and rationally $\frac{k}{2}$-connected.
\end{rmk}

\begin{rmk}
\label{rmk: sphere 1-4}
If $B$ is a sphere, then all the four conditions (A)--(D)
are equivalent because if the projection
of a sphere bundle over $S^q$ is $\pi_q$-surjective, then it has a section, which is the contrapositive
of $\mathrm{(D)}\Rightarrow\mathrm{(A)}$. More generally, by Example~\ref{ex: product times L}
we get:
{\em The conditions \rm (A)--(D)\ \em are equivalent if $\xi$ is the product of a closed 
smooth manifold $L$ and a vector bundle bundle over a sphere.}
\end{rmk}

Another natural way to obtain (B) is based on the fact that 
the structure group of $\xi$ reduces 
to $G$~\cite[Theorem 7.1 in Chapter II]{KN-book}, so the existence a connection with
non-transitive potentially restricts $\xi$, and here is how the 
idea can be exploited.

\begin{lem}
\label{lem: R3 bundles no section}
$\mathrm{(D)}\Longrightarrow\mathrm{(B)}$ for any 
$\R^3$-bundle $\xi$ with simply-connected $B$. 
\end{lem}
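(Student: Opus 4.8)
The plan is to argue by contraposition: assume $\xi$ carries a metric connection whose holonomy group $G \subset O(3)$ is not transitive on the fiber $S^2$, and deduce that $p\co S(\xi)\to B$ has a section. Since $B$ is simply-connected, $G = G_0$ is a closed connected subgroup of $O(3)$; in fact it lies in $SO(3)$ because $\xi$ is orientable (its structure group reduces to the connected $G$, hence $w_1(\xi)=0$). The connected closed subgroups of $SO(3)$ are, up to conjugacy, $\{1\}$, $SO(2)$, and $SO(3)$ itself. Transitivity on $S^2$ fails precisely for $G=\{1\}$ and $G=SO(2)$, so I would split into these two cases.

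If $G=\{1\}$, the connection is flat with trivial holonomy, so by the standard holonomy principle (e.g. \cite[Chapter II]{KN-book}) $\xi$ is a trivial bundle, and a trivial $\R^3$-bundle obviously has a nowhere-zero section, so (D) fails. If $G=SO(2)$, then the orbit structure of the $SO(2)$-action on $S^2$ consists of the two fixed points (the poles) together with a one-parameter family of circle orbits. The two fixed points are a $G$-invariant subset of the fiber, so by the reduction of the structure group to $G$ they assemble into a \emph{global} sub-object of $S(\xi)$: concretely, the associated bundle $S(\xi)\times_G \{\pm e_3\}$ is a two-point bundle over $B$, i.e. a double cover of $B$ sitting inside $S(\xi)$. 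Since $B$ is simply-connected this double cover is trivial, so it has a section, which is then a section of $p$. Hence (D) fails.

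The step I expect to require the most care is the passage from ``$G$ fixes a point (or a finite invariant set) in the fiber $F$'' to ``$p$ has a global section.'' The clean way is to invoke the reduction of the structure group of $\xi$ to the holonomy group $G$ \cite[Theorem 7.1 in Chapter II]{KN-book}: with that reduction in hand, any $G$-invariant subset $A\subseteq F$ determines a subbundle of $S(\xi)$ with fiber $A$, and one only needs $A$ to admit a global section. For $A$ the two poles this is a double cover of the simply-connected $B$, hence trivial; for the flat case $A$ is a single point and there is nothing to do. A minor subtlety is confirming that simple-connectivity of $B$ forces $G$ to be connected (true since $\pi_1(B)\to G/G_0$ is surjective) and orientation-preserving; both are immediate. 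I would also remark, paralleling Theorem~\ref{thm: implications}, that this argument in fact yields the full equivalence of (B), (C), (D) for $\R^3$-bundles over simply-connected bases, since an $SO(2)$-holonomy bundle splits off a trivial $\R$-summand.
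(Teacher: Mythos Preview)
Your proposal is correct and follows essentially the same approach as the paper: argue by contraposition, use simple-connectivity of $B$ to force the holonomy group to be a connected subgroup of $SO(3)$, classify such subgroups, and show that in the non-transitive cases the structure group reduction yields a section. The paper compresses your two cases into one line (``a proper connected closed subgroup of $SO(3)$, that is, the standard $SO(2)$. Hence $\xi$ has a section''), implicitly using that the trivial group sits inside $SO(2)$ and that an $SO(2)$-structure splits off a trivial line summand; your version via the invariant pair of poles forming a trivial double cover is just a slight rephrasing of the same mechanism.
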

\begin{proof}
Since $B$ is simply-connected, the structure group of $\xi$ lies in $SO(3)$,
and the holonomy group of any Euclidean metric connection on $\xi$ is connected.
If (B) fails for $\xi$, then the holonomy group is a proper connected closed
subgroup of $SO(3)$, that is, the standard $SO(2)$. Hence $\xi$ has a section.
\end{proof}

\begin{ex}
\label{ex: SO(3) bundles over CP2}
By~\cite[Theorem 1]{GroZil-lift} every non-spin vector bundle over $\cptwo$
admits a complete metric of $K\ge 0$. Let us show that 
most non-spin $\R^3$-bundles over $\cptwo$ satisfy (B). 
To this end recall that by~\cite{DolWhi} the set of isomorphism clases 
of non-spin $\R^3$-bundles over $\cptwo$ is bijective 
to $\{1+4d\,:\, d\in\Z\}$, where $\xi$ is sento to the first 
Pontryagin class of $\xi$ evaluated on the fundamental class of $\cptwo$.
Such $\xi$ has a section  if and only $\xi\cong\e^1\oplus\eta$, 
the Whitney sum of a trivial $\R$-bundle 
and an $\R^2$-bundle $\eta$. This happens if and only if 
$p_1(\xi)$ is the square of $e(\eta)$, the Euler class of $\eta$.
Equivalently, $1+4d=k^2$ for some integer $k$. 
The letter happens if and only if $d$ is the product of two consecutive integers.
In summary, the non-spin $\R^3$-bundle over $\cptwo$ that corresponds to $1+4d$
satisfies (B) if and only if $d$ is not the product of two consecutive integers.
\end{ex}

The following lemma illustrates yet another method of checking (B).

\begin{lem}
\label{lem: TCPn THPn}
Let $L$ be a closed connected smooth manifold, and let $\xi$ be
a smooth Euclidean vector bundle over a closed manifold $B$.
If $\xi$ shares the total space with $L\times T\cpn$ or 
$L\times T\hpn$ for an even positive $n$, 
then $\xi$ satisfies \textup{(B)}.
\end{lem}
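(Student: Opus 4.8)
The plan is to verify condition (A) of Theorem~\ref{thm: implications} for the tangent bundles $T\cpn$ and $T\hpn$ themselves, and then transfer it to $\xi$. By Example~\ref{ex: product times L} the bundle $L\times T\cpn$ satisfies (A) if and only if $T\cpn$ does; and by Remark~\ref{rmk: share total space} the unit sphere bundle of $\xi$ is fiber homotopy equivalent to that of $L\times T\cpn$ over the canonical homotopy equivalence of their zero sections, so (A) — being a property of the bundle projection viewed as a map of fibrations over the base — passes from $L\times T\cpn$ to $\xi$; then (B) follows from Theorem~\ref{thm: implications}. The same reduction works with $\hpn$ in place of $\cpn$. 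Hence it remains to show that $T\cpn$ and $T\hpn$ satisfy (A) for $n$ even and positive.

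For $\cpn$ I would use the Hopf bundle $g\co S^{2n+1}\to\cpn$, the quotient of the free linear $S^1$-action; it is a principal $S^1$-bundle, so its vertical tangent bundle is trivial of rank $1$ and, for the round metric, $TS^{2n+1}\cong g^*T\cpn\oplus\e^1$. The key claim is that $[g]$ does not lie in the image of $\pi_{2n+1}(S(T\cpn))\to\pi_{2n+1}(\cpn)$, equivalently that $g$ does not lift to $S(T\cpn)$, equivalently that $g^*T\cpn$ has no nowhere-zero section. If it had one, then together with a unit section of the trivial summand $\e^1$ it would give two pointwise linearly independent sections of $TS^{2n+1}\cong g^*T\cpn\oplus\e^1$, i.e.\ two linearly independent vector fields on $S^{2n+1}$. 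But by Adams' solution of the vector fields problem the maximal number of such fields on $S^{m-1}$ is $\rho(m)-1$ with $\rho$ the Radon--Hurwitz number, and for $m=2(n+1)$ with $n+1$ odd one has $\rho(m)=2$; so $S^{2n+1}$ carries only one, a contradiction. Hence $g$ witnesses the non-surjectivity of $\pi_{2n+1}(S(T\cpn))\to\pi_{2n+1}(\cpn)$, which is (A).

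For $\hpn$ the same argument applies to the quaternionic Hopf bundle $g\co S^{4n+3}\to\hpn$, a principal $Sp(1)$-bundle whose vertical tangent bundle is trivial of rank $3$, so $TS^{4n+3}\cong g^*T\hpn\oplus\e^3$. A nowhere-zero section of $g^*T\hpn$ would produce four linearly independent vector fields on $S^{4n+3}$; but for $m=4(n+1)$ with $n+1$ odd one has $\rho(m)=4$, so $S^{4n+3}$ carries only three, again a contradiction, and (A) holds for $T\hpn$. This is exactly where evenness of $n$ is used: $\rho(2(n+1))=2$ (resp.\ $\rho(4(n+1))=4$) holds precisely when $n+1$ is odd, and for $n$ odd the bound on vector fields is larger, so the section exists and this route does not apply. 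The points requiring care are the identification $TS^{m}\cong g^*T(\mathrm{base})\oplus(\text{trivial bundle})$ — the triviality of the vertical tangent bundle of a principal bundle and the choice of an invariant metric — and the bookkeeping of the Radon--Hurwitz numbers; beyond assembling the cited ingredients I do not anticipate a serious obstacle.
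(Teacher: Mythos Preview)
Your proof is correct and follows essentially the same approach as the paper: both reduce to showing that the horizontal distribution $\nu=g^*T\cpn$ of the Hopf fibration $g\co S^{2n+1}\to\cpn$ (resp.\ $g^*T\hpn$ for $S^{4n+3}\to\hpn$) admits no nowhere-zero section, which is exactly the vector-field bound on spheres for $n$ even. The paper packages this as a contradiction---assuming (B) fails for $\xi$, pulling the non-transitive connection back to the circle-bundle preimage of $B$ in $L\times E(\nu)$, and landing at a section of $\nu$---whereas you verify (A) for $T\cpn$ directly via Remark~\ref{rmk: euler hurewicz} and then transfer to $\xi$ using Example~\ref{ex: product times L} and Remark~\ref{rmk: share total space}; this is a slightly more streamlined arrangement of the same content, and your appeal to Adams' theorem is a harmless strengthening of the elementary fact the paper cites from~\cite[Example~4L.5]{Hat-book}.
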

\begin{proof}
This improves on~\cite[Example 3]{GuiWal-transitive} which deals with the case
when $L$ is a point and $B$ is the zero section of $T\cpn$ or $T\hpn$. 
Consider the standard $S^1$-action on $TS^{2n+1}$ induced by the
diagonal $S^1$-action on $\C^{n+1}$. The action preserves
the $\R^{2n}$-subbundle $\nu$ of $TS^{2n+1}$ whose fibers are 
orthogonal to the \mbox{$S^1$-orbits}.
Let $E(\nu)$ be the total space of $\nu$, and 
$S\subset L\times E(\nu)$ be the preimage of $B$ under the orbit map 
$L\times E(\nu)\to L\times T\cpn$ that is the identity on the first factor. 
The circle bundle inclusion of $S^1\to S\to B$ into $S^1\to L\times E(\nu)\to L\times T\cpn$
is the identity on the fibers and a homotopy equivalence on the base, so 
the inclusion $S\to L\times E(\nu)$ is a homotopy equivalence (by the $5$-lemma
applied to the homotopy exact sequence of the bundles and the Whitehead theorem).
Arguing by contradiction assume that $\xi$ admits a connection with non-transitive holonomy.
Let $\nu_S$ be the normal bundle  to $S$ in $L\times E(\nu)$.
Since $\nu_S$ is a pullback of $\xi$, Lemma~\ref{lem: pullback} shows that
$\nu_S$ admits a connection with non-transitive holonomy. 
Thus (A) fails for $\nu_S$, and hence it also does for 
$L\times \nu$, see~\cite[Proposition 4.1]{BKS-mod1}. 
Remark~\ref{rmk: sphere 1-4} implies that 
$L\times \nu$ has a nowhere zero section, and hence so does $\nu$.
Thus $S^{2n+1}$ has two orthonormal tangent vector fields 
which is impossible for even positive $n$~\cite[Example 4L.5]{Hat-book}.
With obvious modifications the same proof works for $\hpn$.
\end{proof}

Finally, we give several examples where (B) fails.

\begin{ex} 
\label{ex: CPodd} 
The $S^1$ quotient of the Hopf fibration $S^{4m+3}\to \mathbb{H}\mathrm{P}^m$ is
a smooth fiber bundle $S^2\to \mathbb{C}\mathrm{P}^{2m+1}\to\mathbb{H}\mathrm{P}^m$.
Thus $T\mathbb{C}\mathrm{P}^{2m+1}$ splits as a Whitney sum if $m>0$, and hence fails (C).
I do not know if $T\mathbb{H}\mathrm{P}^{2m+1}$  satisfies (B) for $m>0$.
\end{ex}

\begin{ex}
The Riemannian connection on
the tangent bundle to a closed simply-connected irreducible compact
symmetric space $M$ has transitive holonomy if and only if $M$ has rank one, 
see~\cite[Sections 10.35, 10.79, 10.80]{Bes-book}; thus (B) fails for $TM$ if 
$M$ has rank $\ge 2$. 
In Example~\ref{ex: CPodd} we saw that (B) can fail (for some other metric connection)
even when the rank is one. 
\end{ex}

\begin{ex} (C) fails for the product of any two vector bundles with positive-dimensional fibers. 
If the factors have nonzero Euler classes, then $\xi$ satisfies (D), and hence 
$(D)\centernot\Longrightarrow (C)$. For example, 
the tangent bundle to $S^2\times S^2$
has no section (because the Euler characteristic is nonzero) but it splits as a Whitney sum. 
\end{ex}

\begin{ex} Let us show that 
$\mathrm{(C)}\centernot\Longrightarrow \mathrm{(B)}$.
Let $L$ denote any lens space 
of any (odd) dimension $\ge 3$ with $\pi_1(L)\cong \Z_m$, see~\cite[Example 2.43]{Hat-book}.
The universal coefficients theorem 
gives isomorphisms $H^2(L;\Z)\cong H^2(L;\Z_m)\cong\Z_m\cong
H^1(L;\Z_m)$. Denote  generators of $H^2(L;\Z)$, $H^1(L;\Z_m)$
by $\a$, $\b$, respectively. Let $\xi$ be an oriented $\R^2$-bundle over $L$ with Euler class $\a$.
Since $\a$ is torsion, $\xi$ is flat~\cite[Theorem 6.1]{KamTon}, 
and its holonomy group is an image of $\Z_m$, and hence (B) fails. 
Let us show that if $m=4k$ with $k\in\N$, then $\xi$ satisfies (C). 
By \cite[Example 3.41]{Hat-book} 
$2k\a (\mathrm{mod}\ m)=\b^2$ and further reducing mod $2$ gives $0=\b^2(\mathrm{mod}\ 2)$. 
Since $\b$ is a generator, the cup-square of any element of $H^1(L;\Z_2)$
is a multiple of $\b^2 (\mathrm{mod}\ 2)=0$.
On the other hand, $\a (\mathrm{mod}\ 2)$
generates a group isomorphic to $\Z_m\otimes \Z_2\cong\Z_2$,  
hence $\a (\mathrm{mod}\ 2)$ is not a square.
Now $\a (\mathrm{mod}\ 2)=w_2(\xi)$
If $\xi=\g\oplus\g^\prime$, 
the Whitney sum of two line bundles, then
since $\xi$ is orientable we get $0=w_1(\xi)=w_1(\g)+w_1(\g^\prime)$, so $\g=\g^\prime$
because an $\R$-bundle is determined by its $w_1$.
The Whitney sum formula gives $0\neq w_2(\xi)=w_1(\g)^2=0$, which is a contradiction. 
\end{ex}

\begin{lem}
\label{lem: flat soul exp not diffeo}
If $V$ is an open complete $n$-manifold of $K\ge 0$
with a flat soul and $H_{n-1}(V;\Z_2)=0$, then
every path-component of $\mathcal{R}_{K\ge 0}(V)$ 
contains a metric for which  
the normal exponential map to  soul is not a diffeomorphism
and the normal holonomy group of the soul is finite. 
\end{lem}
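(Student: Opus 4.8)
\emph{Plan.} The first thing I would do is arrange that flatness of souls is available everywhere, since the hypothesis only furnishes one flat soul. A soul is a deformation retract of $V$, so any two souls of $V$ are homotopy equivalent; in particular all souls share a dimension $d$ and a fundamental group, and since the given soul is flat this group is a Bieberbach group of rank $d$. A soul, being totally geodesic, carries an intrinsic metric of $K\ge 0$, and a closed $d$-dimensional manifold of $K\ge 0$ whose fundamental group is virtually $\Z^d$ is flat: by the Cheeger--Gromoll structure theorem~\cite{CheGro-soul} its universal cover splits as $\R^m\times C$ with $C$ compact simply connected, and the rank of the fundamental group forces $m=d$, hence $C$ a point. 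So every soul of $V$ is flat. Moreover a closed $d$-manifold has $H_d(\,\cdot\,;\Z_2)\neq 0$, so from $H_{n-1}(V;\Z_2)=0$ and the retraction we get $d\le n-2$: every soul has codimension $\ge 2$. The same reasoning applies to any finite cover of $V$. Fix a path-component $\mathcal R_0$ for the rest.

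Next I would pin down a convenient metric in $\mathcal R_0$ using Wilking's splitting~\cite{Wil-fund-gr}: there is $g\in\mathcal R_0$ whose pullback $\hat g$ to a finite Galois cover $\hat V\to V$ with deck group $\Gamma$ is a Riemannian product $T^k\times N$, with $N$ open, simply connected, complete, of $K\ge 0$. The soul of $(\hat V,\hat g)$ is $T^k\times(\text{soul of }N)$; it is flat by the first paragraph applied to $\hat V$, hence the soul of $N$ is a simply connected closed flat manifold, i.e. a point. Thus $N$ is diffeomorphic to $\R^{n-k}$ with its point-soul's normal exponential map a diffeomorphism, $\hat V=T^k\times N$, and $\Gamma$ acts freely by $\hat g$-isometries. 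Because $\hat g$ is a product, the normal connection of the soul $T^k\times\{\mathrm{pt}\}$ is trivial, so the normal holonomy group of the soul of $g$ is the (finite) image of $\Gamma$ acting on the normal space, and $\exp^\bot$ to the soul of $g$ is a diffeomorphism since its $\Gamma$-equivariant pullback is. This $g$ realizes the ``finite normal holonomy'' half of the conclusion.

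What remains, and is the heart of the matter, is to deform $g$ inside $\mathcal R_0$ to a metric $\bar g$ for which $\exp^\bot$ to the (still flat) soul fails, \emph{without altering $g$ on a tubular neighbourhood of the soul} — this keeps the soul and its finite normal holonomy. Recall that $\exp^\bot$ to a soul is a diffeomorphism exactly when every unit normal geodesic is a ray, equivalently when the closed, normal-holonomy-invariant set of ray directions fills each normal sphere (which is positive-dimensional here, and the holonomy is finite, hence non-transitive). For ``bundle-like'' metrics — for instance a twisted product of the flat soul with a rotationally symmetric nonnegatively curved fibre — this set is always the whole sphere, so $\bar g$ must genuinely leave that class. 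I would build $\bar g$ by grafting onto $g$, along a single normal geodesic ray leaving the soul and supported away from the soul, a compactly supported $K\ge 0$ modification carrying enough positive curvature along that ray to produce a focal point of the soul (so the geodesic stops minimizing), carrying out the grafting on $\hat V=T^k\times N$ so that it commutes with the finite $\Gamma$-action and exploiting the flexibility of nonnegatively curved metrics on $N\cong\R^{n-k}$ far from its point-soul to keep $K\ge 0$ globally; scaling the modification back to $0$ then gives a path in $\mathcal R_{K\ge 0}(V)$ from $\bar g$ to $g$. I expect this last construction to be the only real obstacle: manufacturing a nonnegatively curved metric in which some normal geodesic of the soul ceases to minimize, while never violating $K\ge 0$ and while respecting the deck symmetry needed to descend from the torus cover — precisely circumventing the constraints that force bundle-like metrics to have $\exp^\bot$ a diffeomorphism. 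The first two paragraphs are standard structure-theory bookkeeping.
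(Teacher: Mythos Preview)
Your structure-theory setup (first two paragraphs) is sound and matches the paper's route: reduce via Wilking~\cite{Wil-fund-gr} to a metric whose finite Galois pullback splits as a flat torus times a simply-connected $K\ge 0$ factor $N\cong\R^{n-k}$, with the deck group $\Gamma$ acting orthogonally on the $\R^{n-k}$-factor. Your observation that all souls of $V$ are flat is more than the paper states explicitly, but it is correct and harmless.

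The genuine gap is your third paragraph. You propose a \emph{compactly supported} $K\ge 0$ modification of $N$ along a single normal ray, meant to produce a focal point. This is exactly where rigidity bites: after Wilking the paper takes the $\R^{n-k}$-factor to be \emph{standard} (flat), and a complete metric on $\R^m$ that has $K\ge 0$ and agrees with the flat metric outside a compact set is flat (for $m=2$ this is Gauss--Bonnet; in general the flat end forces the total Busemann function to be linear at infinity, and convexity pins it down). So your grafting cannot be nontrivial on the flat background, and even on a non-flat $N$ you give no mechanism for inserting enough positive curvature to force a conjugate point while keeping $K\ge 0$ on the seams. You flag this as ``the only real obstacle,'' and it is indeed the whole content of the lemma.

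The paper resolves it by a two-step $\Gamma$-equivariant deformation of the Euclidean factor: first replace flat $\R^{n-k}$ by a \emph{rotationally symmetric} metric $h$ with a cylindrical end and \emph{positive} curvature near the origin (this is where the ``room'' you were hoping for is manufactured, and rotational symmetry makes $\Gamma$-equivariance automatic); second, invoke the Gromoll--Meyer perturbation~\cite{GroMey} (page~77), which takes exactly such an $h$ and produces a nearby complete metric of $K\ge 0$ with \emph{no poles}. Multiplying by the torus and descending by $\Gamma$ gives the desired metric in the same path-component. The missing idea in your attempt is that one should first leave the flat model---passing to a cylindrical-end, positively curved rotationally symmetric metric---before perturbing, rather than trying to graft onto the rigid flat background.
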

\begin{proof}
By~\cite[Corollary 6.3]{Wil-fund-gr} 
any metric can be joined by a path in $\mathcal{R}_{K\ge 0}(V)$ 
to a metric whose pullback to a finitely-sheeted Galois 
cover splits as a flat torus and the standard $\R^k$.
Here $k\ge 2$ because the soul has codimension $\ge 2$.
The action of the deck-transformation group $F$ on the $\R^k$-factor
is orthogonal, and one can $F$-equivariantly deform $\R^k$
to a rotationally symmetric metric $h$ that has  a cylindrical end
and positive curvature near the origin (a point fixed by $F$).
Then similarly to~\cite[page 77]{GroMey}
one can $F$-equivariantly perturb $(\R^k, h)$ to a complete metric
of $K\ge 0$ with no poles.
Multiplying $h$ by the above flat torus, and quotienting by $F$
gives a metric with claimed properties.
\end{proof}

Recall that a vector bundle is {\em spin\,} if and only if its $w_1$, $w_2$ vanish,
where $w_i$ is the $i$th Stiefel-Whitney class, and a manifold is {\em spin\,}
if so is its tangent bundle.
The following lemma lets us easily compute $w_i$ of the total space of $\xi$, or 
of the double of its unit disk bundle.

\begin{lem}
\label{lem: double SW}
Let $\xi$ be a smooth vector bundle over a closed
manifold $B$, and 
let $S$ be the double of the unit disk bundle of $\xi$ along the boundary.
Then $w_i(S)=0$ if and only if $w_i(\xi\oplus TM)=0$ if and only if $w_i$
of the total space of $\xi$ is zero. 
\end{lem}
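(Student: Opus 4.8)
The plan is to reduce everything to a single computation of Stiefel–Whitney classes via the standard fact that $w(TE)$ is the pullback of $w(TB)\,w(\xi)$ for the total space $E$ of a vector bundle $\xi\to B$, and then to handle the double by a similar bundle-theoretic identification. First I would recall that the total space $E$ of $\xi$ deformation retracts onto the zero section $B$, so the projection $\pi\co E\to B$ is a homotopy equivalence and $\pi^*\co H^*(B;\Z_2)\to H^*(E;\Z_2)$ is an isomorphism. The tangent bundle splits along the zero section as $TE|_B\cong TB\oplus\xi$, and since $TE\cong\pi^*(TB\oplus\xi)$ as bundles over $E$ (the vertical tangent bundle of $\xi$ is $\pi^*\xi$, the horizontal one is $\pi^*TB$), we get $w_i(TE)=\pi^*\big(w_i(TB\oplus\xi)\big)$. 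Because $\pi^*$ is injective, $w_i(E)=0$ iff $w_i(TB\oplus\xi)=0$, which is the equivalence between the second and third conditions. I would write $w_i(\xi\oplus TM)$ in the lemma as $w_i(TB\oplus\xi)$, reading $M=B$ (the paper's notation drift).

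Next I would treat the double $S=D(\xi)\cup_{S(\xi)}D(\xi)$, where $D(\xi)$, $S(\xi)$ denote the unit disk and sphere bundles. The key point is that $S$ is a closed manifold containing $D(\xi)$ as a codimension-zero submanifold, and $D(\xi)$ is itself a smooth manifold with boundary that deformation retracts onto the zero section $B$; moreover $D(\xi)$ is diffeomorphic to a closed tubular neighborhood of $B$ in $S$, so $TS|_B\cong TD(\xi)|_B\cong TB\oplus\xi$. Thus the inclusion $j\co B\hookrightarrow S$ of the zero section of one of the two disk bundles pulls $TS$ back to (a bundle stably equivalent to, in fact isomorphic to) $TB\oplus\xi$: $j^*TS\cong TB\oplus\xi$. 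Hence $j^*w_i(S)=w_i(TB\oplus\xi)$. To conclude $w_i(S)=0\iff w_i(TB\oplus\xi)=0$ I need $j^*\co H^i(S;\Z_2)\to H^i(B;\Z_2)$ to be injective in the relevant degrees — but this is clear because $j$ admits a retraction: the double $S$ retracts onto either copy of $D(\xi)$ by collapsing the other copy to the sphere bundle (equivalently, $S$ is glued from two copies of $D(\xi)$ along their common boundary, and projecting each copy to $B$ gives a well-defined map $S\to B$ since the two projections agree on $S(\xi)$), and this retraction $q\co S\to B$ satisfies $q\circ j=\mathrm{id}_B$, so $j^*q^*=\mathrm{id}$ and $j^*$ is surjective with $q^*$ a section; in particular $j^*$ is injective on the image of $q^*$, and since $w_i(TB\oplus\xi)$ lies in that image (it equals $j^*w_i(S)$, but more to the point $w_i(S)=q^*$ of something precisely when it is detected on $B$), a cleaner statement is simply that $q^*\co H^*(B)\to H^*(S)$ is injective (as $q$ has a section) and $w_i(S)$, being determined by $TS$ which is $q^*$-pulled-back up to the collapsing, actually equals $q^*w_i(TB\oplus\xi)$. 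I would verify this last equality by noting $TS\cong q^*(TB\oplus\xi)$ over all of $S$, because away from the gluing locus each half is the total space of a (disk) bundle whose tangent bundle is the pullback of $TB\oplus\xi$, and these identifications match along $S(\xi)$ up to homotopy.

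So the main steps, in order: (1) establish $TE\cong\pi^*(TB\oplus\xi)$ with $\pi^*$ an isomorphism on mod-$2$ cohomology, giving the equivalence of the last two conditions; (2) identify the double $S$ as two copies of the disk bundle glued along $S(\xi)$, produce the collapse map $q\co S\to B$ and its section $j$, and show $TS\cong q^*(TB\oplus\xi)$; (3) conclude $w_i(S)=q^*w_i(TB\oplus\xi)$, which vanishes iff $w_i(TB\oplus\xi)=0$ since $q^*$ is injective. The main obstacle I anticipate is step (2): producing the global bundle isomorphism $TS\cong q^*(TB\oplus\xi)$ rather than just over each half and over the zero sections. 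The care needed is that the horizontal/vertical splitting of $T(D(\xi))$ depends on a choice of connection, and the two halves must be glued compatibly; I would fix one metric connection on $\xi$, use it to build the splitting on both copies of $D(\xi)$, and check the identifications agree on the overlap $S(\xi)$ — the vertical directions on the two sides are $\pm$ the same, the horizontal directions are literally the same, so the isomorphisms agree up to a bundle automorphism that is homotopic to the identity, which is all that is needed for the Stiefel–Whitney computation. Everything else is routine naturality of characteristic classes and the Whitney sum formula.
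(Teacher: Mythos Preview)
Your step~(2) contains a genuine error: the isomorphism $TS\cong q^*(TB\oplus\xi)$ is false in general. Take $\xi=TS^2$ over $B=S^2$. Then $\xi\oplus\e^1$ is trivial, so the double is $S\cong S^2\times S^2$ with $q$ the projection to the first factor; hence $q^*(TB\oplus\xi)=p_1^*TS^2\oplus p_1^*TS^2$ has Euler class $q^*\bigl(e(TS^2)^2\bigr)=0$, while $e\bigl(T(S^2\times S^2)\bigr)=\chi(S^2\times S^2)=4\neq 0$. The reason your gluing check fails is that when you double, the outward normal of one copy becomes the inward normal of the other, so under your identifications $\phi_\pm\co TD_\pm\to\pi^*(TB\oplus\xi)$ the clutching over $S(\xi)$ is reflection through $v^\perp$ on the $\xi$-summand. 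This reflection has determinant $-1$ in each fiber, hence lies in the non-identity component of the structure group and is \emph{not} homotopic to the identity, contrary to your assertion.

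The fix is to work stably: one has $TS\oplus\e^1\cong q^*(TB\oplus\xi)\oplus\e^1$, which is all you need since Stiefel--Whitney classes are stable. The paper obtains this cleanly by observing that the double $S$ is the unit sphere bundle of $\xi\oplus\e^1$ (the two hemispheres $t\ge 0$ and $t\le 0$ of $S(\xi\oplus\e^1)$ are the two copies of $D(\xi)$), so the standard formula $TS(\eta)\oplus\e^1\cong q^*(\eta\oplus TB)$ with $\eta=\xi\oplus\e^1$ gives $TS\oplus\e^1\cong q^*(\xi\oplus TB)\oplus\e^1$ immediately. The paper then proves injectivity of $q^*$ via the $\Z_2$-Gysin sequence (the Euler class of $\xi\oplus\e^1$ vanishes); your argument that $q$ has a section $j$ with $q\circ j=\mathrm{id}_B$, hence $q^*$ is split injective, is correct and in fact more elementary---it uses exactly the same underlying fact (a nowhere-zero section of $\xi\oplus\e^1$) without invoking Gysin.
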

\begin{proof}
The key point is that $S$ is the unit sphere bundle 
of $\xi\oplus\e^1$ where $\e^1$ is the trivial $\R$-bundle over $B$.
Let $\bm{q}\co D\to B$ be the unit disk bundle of $\xi\oplus\e^1$ and
set $q=\bm{q}\vert_S\co D\to B$, the sphere bundle projections.
Then $TD=\bm{q}^*(\xi\oplus TB)$, and 
since $\bm q$ is homotopic to the identity,
$w_i(\xi\oplus TB)=0$ if and only if $w_i(\Int\, D)=0$
where $\Int\, D$ is the total space of $\xi$.
Since $TS\oplus\e^1\cong TD\vert_S=q^*(\xi\oplus TB)$, 
the map $q^*$ takes $w_i(\xi\oplus TB)$ to $w_i(S)$.
Since $\xi\oplus\e^1$
has a nowhere zero section, the \mbox{$\Z_2$-Euler} class of $q$
is zero, hence the $\Z_2$-Gysin sequence of $q$ gives injectivity of 
$q^*\co H^i(M;\Z_2)\to H^i(S;\Z_2)$.
Thus $w_i(\xi\oplus TB)=0$ if and only if $w_i(S)=0$.
\end{proof}

\begin{ex}
\label{ex: RPn}
The total space of the tautological line bundle over 
$\mathbb{R}\mathrm{P}^n$ is spin if and only if $n\equiv 2\,(\mathrm{mod}\, 4)$,
as easily follows from the fact that $T\mathbb{R}\mathrm{P}^n$ is stably isomorphic to the 
Whitney sum of $n+1$ copies of the tautological bundle.
\end{ex}

\section{Approximating by a family of convex hypersurfaces}
\label{sec: family of hypersurfaces}

In this section we prove Theorem~\ref{thm: push to C}.
Since the normal sphere bundle to some (and hence any~\cite[Proposition 4.1]{BKS-mod1}) 
soul has no section, every metric $g\in\mathcal R_{K\ge 0}(V)$ has a unique
soul $S=S_g$ which depends continuously on $g$~\cite[Theorem 2.1]{BFK}.

Since the normal exponential map to $S$ is a diffeomorphism,
the distance function to the $S$ is smooth away from $S$. 
The function is also 
convex by Riccati comparison, see~\cite[Lemma 1.2]{EscFre}.
In particular, the closed $1$-neighborhood $D$ of $S$
is a totally convex, compact, smooth codimension zero submanifold of $V$ with infinite
normal injectivity radius. The distance
function $d(\cdot , D)$ to $D$ is smooth away from $D$, and also convex,
again, by Riccati comparison~\cite[Lemma 3.4(a)]{Esc-nneg}, or
alternatively, by concavity of $d(\cdot , \d D)$ on $D$
established in~\cite[Theorem 1.10]{CheGro-soul}.


We are going to approximate $(V,g)$ by a family of convex hypersurfaces in $V\times\R$. 
To visualise the process think of the surface in $\R^3$
that is a rotationally symmetric smoothing of the ``drinking glass''
\[
\{(x,y,0)\in \R^3\,:\, x^2+y^2\le r^2\}\cup \{(x,y,z)\in \R^3\,:\, x^2+y^2=r^2\ \text{and}\ z\ge 0 \}
\]
which tends to the $xy$-plane as $r\to\infty$.
Let $h\co (-\infty, 0)\to [0,1)$ be a surjective $C^\infty$ function such that 
\begin{itemize}
\item $h\vert_{(-\infty, -1]}=0$, 
\item the derivatives
$h^\prime$, $h^{\prime\prime}$ are positive on $(-1,0)$, 
\item
the inverse of $h\vert_{(-1,0)}$ extends to
a $C^\infty$ function $\bm h\co (0,\infty)\to (-1,0]$ that
vanishes on $[1, \infty)$.
\end{itemize}

Fix $r>1$. Let $N_r$ denote the closed $r$-neighborhood of $D$ in $V$.
For $\r\in [0,r)$
let $G_\r\subset V\times\R$ be the graph of $h(d(\cdot, D)-r)$ 
over $\mathrm{Int}\,N_{\r}$, and let  $\bar G_\r$ be its closure. 
Thus $\bar G_r\setminus G_r=\d N_{r}\times\{1\}$. 
The function $h(d(\cdot, D)-r)$ is convex because $d(\cdot, D)-r$ is convex and
$h$ is non-decreasing convex. Hence the induced metric on $G_r$ has $K\ge 0$,
see Lemma~\ref{lem: convex implies nonnnegative curv} below. 
The same holds for $\d N_{r}\times [1,\infty)$
because $\d N_{r}$ bounds $N_r$,
a convex subset of a nonnegatively curved manifold.
The union $H_r$ of $G_r$ and $\d N_{r}\times [1,\infty)$ is clearly
a topological submanifold of $V\times\R$ which is smooth away from 
$\d N_{r}\times \{1 \}$.
Lemma~\ref{lem: H_r is smooth} below
shows that $H_r$ is smooth, and hence its induced metric
also has $K\ge 0$.

To proceed we need some notations.
Let $U$ denote the $g$-unit normal bundle to $\d D$ so that
$V\setminus D$ is identified with $U\times (0,\infty)$ via
the normal exponential map $\exp^\bot$ of $D$. 
Let $f_r\co U\times (0,\infty)\to V\times\R$ be the map
given by  $f_r(u,t)=(\exp^\bot(\bm{h}_r(t)u), t)$ where
$\bm{h}_r:=\bm{h}+r$ takes values in $(r-1,r]$.
We now verify smoothness of $H_r$.

\begin{lem}
\label{lem: H_r is smooth}
$f_r$ is a smooth embedding whose image is $H_r\setminus\bar G_{r-1}$.
\end{lem}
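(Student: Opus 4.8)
The plan is to verify the three claims separately: that $f_r$ is smooth, that it is an embedding, and that its image equals $H_r\setminus\bar G_{r-1}$. Smoothness is the heart of the matter, so I would start there. Write $f_r(u,t)=(\exp^\bot(\bm h_r(t)u),t)$ on $U\times(0,\infty)$. The coordinate $t$ is obviously smooth, so the issue is whether $(u,t)\mapsto\exp^\bot(\bm h_r(t)u)$ is smooth as a map into $V$. The map $(u,s)\mapsto\exp^\bot(su)$ from $U\times\R$ to $V$ is smooth (since $\exp^\bot$ of $D$ is a diffeomorphism onto $V\setminus\mathrm{Int}\,D$ when extended appropriately, using the infinite normal injectivity radius established before the lemma), so composing with the smooth map $(u,t)\mapsto(u,\bm h_r(t))$ gives smoothness of $f_r$ — \emph{provided} one is careful about the region where $\bm h_r(t)=r$, i.e.\ where $t\ge 1$ and $\bm h(t)=0$. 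But $\bm h$ is $C^\infty$ on all of $(0,\infty)$ by construction (it vanishes identically on $[1,\infty)$ and extends the inverse of $h|_{(-1,0)}$), so $\bm h_r=\bm h+r$ is $C^\infty$ on $(0,\infty)$ and there is no issue: the potential non-smoothness of the ``drinking glass'' at the crease $\d N_r\times\{1\}$ has been absorbed into the definition of $\bm h$. This is the main obstacle to watch — one must point out that the crease is parametrized smoothly precisely because $\bm h$ was built to have vanishing one-sided derivatives of all orders at the endpoints, so $f_r$ extends $C^\infty$ across $t=1$.

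Next, $f_r$ is an embedding. It is injective because the $\R$-coordinate $t$ recovers the second argument, and for fixed $t$ the map $u\mapsto\exp^\bot(\bm h_r(t)u)$ is injective since $\bm h_r(t)\in(r-1,r]\subset(0,\infty)$ and $\exp^\bot$ restricted to vectors of a fixed positive length is injective (again by infinite normal injectivity radius). For immersion, I would compute the differential: the $\d_t$-direction always has a nonzero component in the $\R$-factor, and the directions tangent to $U$ (at fixed $t$) map to linearly independent vectors because $s\mapsto\exp^\bot(su)$ has injective differential for $s>0$ and the sphere directions in $U$ are carried isomorphically. Since $f_r$ is a proper injective immersion onto its image (properness follows because $|f_r(u,t)|$ controls $t$ and $t\to\partial(0,\infty)$ forces escape), it is a smooth embedding.

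Finally, for the image, I would unwind the definitions of $G_\r$ and $H_r$. A point of $V\setminus\mathrm{Int}\,D$ is $\exp^\bot(su)$ for unique $u\in U$, $s>0$, with $d(\exp^\bot(su),D)=s$. The graph $G_\r$ consists of points $(\exp^\bot(su),\,h(s-r))$ with $s<\r$, and $\d N_\r\times[1,\infty)$ sits over $s=\r$. Since $\bm h$ is the inverse of $h|_{(-1,0)}$, the equation $h(s-r)=h(\bm h(t)+ ( \cdot ))$ unwinds to $s=\bm h_r(t)$ exactly when $t\in(0,1)$, and for $t\ge 1$ one has $\bm h_r(t)=r$, placing $f_r(u,t)$ on $\d N_r\times\{t\}\subset\d N_r\times[1,\infty)$. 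Thus the image of $f_r$ traces out $H_r$ minus the portion lying over $\{s\le r-1\}$, which is exactly $\bar G_{r-1}$ (recall $\bm h$ vanishes on $[1,\infty)$, so $\bm h_r(t)\le r-1$ never occurs — rather the graph $\bar G_{r-1}$ is the part of $H_r$ with height coordinate $h(s-r)$ for $s\le r-1$, where $h=0$). I would state this identification cleanly as: $f_r$ parametrizes $\{(x,z)\in H_r : d(x,D)>r-1\}=H_r\setminus\bar G_{r-1}$, and check the boundary matching at $s=r-1$ (where both $h(s-r)$ and its relevant derivatives vanish) to confirm the two pieces of $H_r$ glue smoothly along $\d N_{r-1}$.
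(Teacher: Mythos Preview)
Your overall strategy is sound and parallels the paper's, but there is one concrete error. You claim that $f_r\co U\times(0,\infty)\to V\times\R$ is proper, arguing that ``$t\to\d(0,\infty)$ forces escape.'' This fails at the left endpoint: as $t\to 0^+$ we have $\bm h(t)\to -1^+$, hence $\bm h_r(t)\to (r-1)^+$, and $f_r(u,t)\to(\exp^\bot((r-1)u),0)\in\d N_{r-1}\times\{0\}$, which lies in a compact set. So $f_r$ is \emph{not} proper, and properness cannot be used to upgrade your injective immersion to an embedding.

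The paper avoids this by exhibiting a smooth left inverse: define $q\co (V\setminus D)\times\R\to U\times\R$ by $q(\exp^\bot(v),t)=\bigl(v/\|v\|,\,t\bigr)$. Then $q\circ f_r=\id$, which simultaneously shows that $f_r$ is an immersion (the differential of $f_r$ has a left inverse $dq$) and that $f_r$ is a homeomorphism onto its image (the restriction of the continuous map $q$ is an inverse). This single observation replaces both your differential computation and your properness argument. Your smoothness discussion and your identification of the image are correct and more detailed than the paper's; only the embedding step needs the fix above.
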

\begin{proof}
Using that $\exp^\bot$ is a diffeomorphism and $U$ is compact one easily sees 
that $f_r$ is a smooth homeomorphism onto its image. Also $f_r$ is an
immersion because if $q$ is given by $q(\exp^\bot(v), t)=\left(\frac{v}{\|v\|}, t\right)$, 
where $\exp^\bot(v)\in V\setminus D$ and
$\|\cdot\|$ is the norm induced by the Riemannian metric, then 
$q\circ f_r$ is the identity. Thus $f_r$ is a smooth embedding.
For $t\ge 1$ we have $f_r(u,t)=(\exp^\bot(ru), t)$, so
$f_r$ maps $U\times [1,\infty)$ diffeomorphically onto 
$\d N_{r}\times [1,\infty)$.
If $0<t<1$, then $\bm{h}_r(t)\in (r-1, r)$,
and $x=\exp^\bot(\bm{h}_r(t)u)$ satisfies
$d(x,D)=\bm{h}_r(t)$, which can be rewritten as $t=h(d(x,D)-r)$.
Thus for each $\r\in (r-1,r)$ the map
$f_r$ takes $U\times\{h(\r-r)\}$ onto $\d G_\r$,
and therefore, 
$U\times (0,1)$ onto $G_r\setminus\bar G_{r-1}$. 

Next we pullback the metric on $H_r$ to $V$ in a way that is continuous in $g$ and $r$.

\begin{lem}
\label{lem: diff stretch}
There is a diffeomorphism $s_{r}\co V\to H_r$ which restricts 
to the identity on $G_{r-1}$ and depends continuously on $r$.
\end{lem}
\begin{proof}
The idea is to map $H_r$ diffeomorphically onto $G_{r-\frac{1}{3}}$
by contracting along the
geodesics orthogonal to $\d D$ so that the contraction is the identity on
$G_{r-\frac{1}{2}}$, and then project orthogonally onto $N_{r-\frac{1}{2}}$,
and finally stretch radially to $V$.
To do this continuously set $a=h(-1/2)$ and $b=h(-1/3)$.
Let $D_{a,b}$ be the diffeomorphism of
$U\times (0,\infty)$ onto $U\times (0,b)$ 
given by $D_{a,b}(u,t)=(u, d_{a,b}(t))$
where $d_{a,b}$ is as in Lemma~\ref{lem: stretch reals} below. 
Then $f_r\circ D_{a,b}\circ f_r^{-1}$ is a diffeomorphism of
$H_r\setminus\bar G_{r-1}$ onto $G_{r-\frac{1}{3}}\setminus\bar G_{r-1}$
that is the identity on $G_{r-\frac{1}{2}}\setminus\bar G_{r-1}$.
Extend $f_r\circ D_{a,b}\circ f_r^{-1}$ 
to a diffeomorphism $H_r\to G_{r-\frac{1}{3}}$ 
that is the identity on $\bar G_{r-1}$.
Postcompose the result with the coordinate projection $V\times \R\to V$
and then with the self-diffeomorphism of $V$ given by 
$\exp^\bot(v)\to\exp^\bot
\left(d_{\a,\b}(\|v\|)\frac{v}{\|v\|}\right)$.
This gives a diffeomorphism $H_r\to V$ 
whose inverse has desired properties.
\end{proof}

Let $g[r]$ denote the the pullback  of the induced metric on $H_r$ via $s_r$.
For $t\in [0,1]$ define a self-map $\r_t$ of $\mathcal R_{K\ge 0}(V)$
by setting $\r_t(g):=g\!\left[\frac{1}{t}\right]$ for $t>0$, and $\r_0(g):=g$.  

\begin{lem}
The map $(t,g)\to\r_t(g)$ is continuous.
\end{lem}
\begin{proof}
Given a sequence $(t_i,g_i)$ converging to $(t,g)$ and
a compact subset $K$ of $V$ we are to show that $\r_{t_i}(g_i)$ 
converges to $\r_t(g)$ on $K$. If $t>0$ this follows because by construction 
$g[r]$ depends continuously on $g$, $r$. 
Suppose $t=0$, so that $\r_{t}(g)=g$. Since $S_{g_i}$ converge to $S_g$,
the $\left(\frac{1}{\,t_i}-1\right)$-neighborhood of $S_{g_i}$ contain $K$ for all large $i$, and hence
over $K$ we have 
$g_i\!\!\left[\frac{1}{\,t_i}\right]=g_i\to g$ as $i\to\infty$. 
\end{proof}

The distance in $H_r$ between the soul and the cylindrical end $\d N_r\times [1,\infty)$
is realized by a geodesic that
lies in a $2$-dimensional half-flat $\s\times \R$, where $\s$ a ray in $V\times\{0\}$ 
orthogonal to the soul. The portion of the geodesic in $N_{r-1}\times\{0\}$ has length $r-1$ and the remaining portion can be identified with the graph of $h$ over $[-1, 0)$ whose length is at most $2$.
Setting $r=\frac{1}{\e}$ gives the last assertion of Theorem~\ref{thm: push to C}. 
\end{proof}

\begin{rmk}
If $(V, g)$ has a cylindrical end and $r$ is sufficiently large, then
$g[r]=g$. More precisely, if $\r\in (0,r-1)$ and
$(V\setminus N_\r, g)$ is isometric to the product of a closed manifold
and $(0,\infty)$, then $g[r]=g$
because $H_r$ and $V$ are both obtained by gluing $N_{\r}$ via the identity map of 
$\d N_{\r}$ to the Riemannian product of $\d N_{\r}$ and
the image of a smooth proper embedding of $[0,\infty)$ 
into $\R\times [0,\infty)$, and the diffeomorphism $s_r$ of Lemma~\ref{lem: diff stretch}
identifies $g$ and the induced metric on $H_r$.
\end{rmk}

\begin{rmk}
\label{rmk: diff equiv}
It follows by construction that 
the map $g\to g[r]$ is $\Diff V$-equivariant, i.e., if $\phi\in\Diff V$,
then $(\phi^*g)[r]=\phi^*(g[r])$.
\end{rmk}

Finally, here are two elementary lemmas that were used above.

\begin{lem}
\label{lem: convex implies nonnnegative curv}
If $V$ is a Riemannian manifold of $K\ge 0$ and $f\co V\to\R$ is convex and smooth,
then the graph of $f$ has nonnegative sectional curvature
in the metric induced by Riemannian product $V\times\R$.
\end{lem}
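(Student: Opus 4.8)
The plan is to view the graph $\Sigma=\{(x,f(x)):x\in V\}$ as a co-oriented hypersurface of the Riemannian product $M:=V\times\R$ and then invoke the Gauss equation; the two things that have to be checked are that $M$ has $K\ge 0$ and that the second fundamental form of $\Sigma$ is semi-definite. The first is standard: the curvature tensor of a Riemannian product is the sum of the curvature tensors of the factors and $\R$ is flat, so for tangent vectors $X,Y$ of $M$ with $V$-components $X^V,Y^V$ we get $\langle R_M(X,Y)Y,X\rangle=\langle R_V(X^V,Y^V)Y^V,X^V\rangle\ge 0$, hence $K_M\ge 0$.

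For the second fundamental form, let $\partial_t$ be the unit field spanning the $\R$-factor and choose the unit normal $\nu$ of $\Sigma$ so that $\langle\nu,\partial_t\rangle>0$; this is possible because a nonzero vector tangent to a graph is never a multiple of $\partial_t$, so $\langle\nu,\partial_t\rangle$ is nowhere zero and of constant sign on each connected component. Write $\Phi(x)=(x,f(x))$. Given $X\in T_xV$, let $c$ be a geodesic of $V$ with $c(0)=x$, $\dot c(0)=X$, defined on a small interval, and lift it to $\tilde c(s):=(c(s),f(c(s)))$ in $\Sigma$. Since the Levi-Civita connection of $M$ is the product connection and $\nabla^V_{\dot c}\dot c=0$, the ambient acceleration is $\bar\nabla_{\dot{\tilde c}}\dot{\tilde c}=(f\circ c)''(s)\,\partial_t$, and taking its $\nu$-component at $s=0$ yields
\[
\mathrm{II}\bigl(d\Phi_x(X),d\Phi_x(X)\bigr)=(f\circ c)''(0)\,\langle\partial_t,\nu\rangle,
\]
where $\mathrm{II}$ is the scalar second fundamental form with respect to $\nu$. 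Here $\langle\partial_t,\nu\rangle>0$, while $(f\circ c)''(0)\ge 0$ because the restriction of the convex function $f$ to the geodesic $c$ is convex. As every tangent vector of $\Sigma$ is of the form $d\Phi_x(X)$, the form $\mathrm{II}$ is positive semi-definite.

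Finally, plug this into the Gauss equation: for tangent vectors $X,Y$ of $\Sigma$ spanning a plane $P$,
\[
K_\Sigma(P)\,|X\wedge Y|^2=K_M(P)\,|X\wedge Y|^2+\mathrm{II}(X,X)\,\mathrm{II}(Y,Y)-\mathrm{II}(X,Y)^2,
\]
with $|X\wedge Y|^2=|X|^2|Y|^2-\langle X,Y\rangle^2>0$ computed in the induced metric. The first term on the right is $\ge 0$ since $K_M\ge 0$, and the remaining part is $\ge 0$ by the Cauchy--Schwarz inequality for the positive semi-definite form $\mathrm{II}$, so $K_\Sigma(P)\ge 0$. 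There is no serious difficulty here; the only points needing care — hence the closest thing to an obstacle — are fixing the sign convention in the Gauss equation and noting that the vertical direction is transverse to $\Sigma$ so $\nu$ can be coherently oriented. Alternatively one can compute directly that $\mathrm{II}=(1+|\nabla f|^2)^{-1/2}\,\mathrm{Hess}\,f$ and use that $f$ convex means $\mathrm{Hess}\,f\ge 0$.
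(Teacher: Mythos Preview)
Your proof is correct and follows essentially the same route as the paper: both arguments reduce to the Gauss equation together with positive semi-definiteness of the second fundamental form of the graph in $V\times\R$. The only cosmetic difference is that the paper realises the graph as a level set of $F(x,t)=f(x)-t$ and checks $\mathrm{Hess}\,F\ge 0$ in normal coordinates, whereas you work with the graph parametrisation $\Phi(x)=(x,f(x))$ and compute $\mathrm{II}$ via the ambient acceleration of a lifted geodesic; your alternative formula $\mathrm{II}=(1+|\nabla f|^2)^{-1/2}\,\mathrm{Hess}\,f$ is exactly the bridge between the two viewpoints.
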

\begin{proof}
By Gauss equation any smooth convex hypersurface in a manifold of $K\ge 0$
has nonnegative sectional curvature. Since the graph of $f$ is a level set of 
the function $F(x,t)=f(x)-t$, it suffices to check that $F$ is convex, or equivalently,
that the Hessian of $F$ is positive semidefinite. Fix normal coordinates centered
at an arbitrary point of $p\in V\times\R$, where the last coordinate 
is the projection on the $\R$-factor. Then the $ij$ entry of $\mathrm{Hess}\,F$ 
at $p$  equals $\d_i\d_j f$ if $i,j\le\dim V$
and zero otherwise, so convexity of $f$ implies convexity of $F$.
\end{proof}

\begin{lem}
\label{lem: stretch reals}
Given $0<a<b$ there is a $C^\infty$ diffeomorphism 
$d_{a,b}\co (0,\infty)\to (0,b)$ 
which restricts to the identity on $(0,a]$, and varies continuously with $a$ and $b$
in the space of smooth self-maps of $(0,\infty)$. 
\end{lem}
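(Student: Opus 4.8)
The plan is to build $d_{a,b}$ by a standard bump-function interpolation between the identity on $(0,a]$ and a map that compresses the ray onto $(0,b)$, arranged so that the gluing is $C^\infty$, the result is a diffeomorphism, and the whole construction is explicitly a continuous (indeed smooth) function of the parameters $(a,b)$ with $0<a<b$.

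Concretely, first I would fix once and for all a single $C^\infty$ function $\chi\co\R\to[0,1]$ with $\chi\equiv 0$ on $(-\infty,0]$ and $\chi\equiv 1$ on $[1,\infty)$ (say the classical $e^{-1/t}$-type bump, antidifferentiated and normalized). Then, given $0<a<b$, I would define
\[
d_{a,b}(t)=\int_0^t \psi_{a,b}(\tau)\,d\tau,
\qquad
\psi_{a,b}(\tau)=1-\chi\!\left(\frac{\tau-a}{b-a}\right)\cdot\eta_{a,b}(\tau),
\]
where $\eta_{a,b}$ is chosen so that $\psi_{a,b}>0$ everywhere, $\psi_{a,b}\equiv 1$ on $(0,a]$, and $\int_0^\infty\psi_{a,b}=b$. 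The point is that $d_{a,b}$ is then automatically a $C^\infty$ strictly increasing map $(0,\infty)\to(0,b)$ that is the identity on $(0,a]$; strict positivity of $\psi_{a,b}$ gives that $d_{a,b}$ is a diffeomorphism onto its image, and the normalization $\int_0^\infty\psi_{a,b}=b$ forces the image to be exactly $(0,b)$. One clean way to get the normalization is a two-stage approach: first arrange $\psi_{a,b}$ to be a positive smooth function that equals $1$ on $(0,a]$, equals a constant $c\in(0,1)$ on $[a+1,\infty)$, and interpolates monotonically via $\chi$ in between; this has $\int_0^\infty\psi_{a,b}=\infty$, which is wrong, so instead I would take $\psi_{a,b}$ to equal $1$ on $(0,a]$ and to decay like, e.g., a fixed smooth positive $L^1$ tail scaled so the total integral is $b$. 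Since $\int_0^a 1\,d\tau=a<b$, there is room: choose a fixed smooth positive function $\theta\co\R\to(0,\infty)$ with $\theta\equiv 1$ near $-\infty$, $\int_{\R}\theta<\infty$ on the relevant side, and set $\psi_{a,b}(\tau)=\theta\!\big(\lambda_{a,b}\,(\tau-a)\big)$ for the unique scaling constant $\lambda_{a,b}>0$ making $a+\lambda_{a,b}^{-1}\!\int_0^\infty\theta=b$; this $\lambda_{a,b}$ depends smoothly on $(a,b)$ by the implicit function theorem (or is even given in closed form), and $\psi_{a,b}\equiv 1$ for $\tau\le a$ since $\theta\equiv 1$ near $-\infty$.

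With $\psi_{a,b}$ in hand, continuity of $(a,b)\mapsto d_{a,b}$ in the compact-open $C^\infty$ topology on self-maps of $(0,\infty)$ is immediate: $d_{a,b}(t)$ and all its $t$-derivatives are obtained from $\psi_{a,b}$ by integration and differentiation, $\psi_{a,b}$ depends smoothly on $(a,b,\tau)$ jointly, and on any compact $t$-interval the dependence on $(a,b)$ is uniform together with all derivatives. That $d_{a,b}$ restricts to the identity on $(0,a]$ is built in, and $d_{a,b}((0,\infty))=(0,b)$ follows from $d_{a,b}(0^+)=0$, strict monotonicity, and $\lim_{t\to\infty}d_{a,b}(t)=\int_0^\infty\psi_{a,b}=b$.

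**The main obstacle** is purely bookkeeping rather than conceptual: one must simultaneously enforce three constraints — agreement with the identity on $(0,a]$, the exact endpoint $b$, and smooth positivity of the derivative — while keeping everything smooth in $(a,b)$ and respecting $a<b$. The scaling trick above handles this because it reduces all three to properties of the single fixed template $\theta$ plus solving one smooth scalar equation for $\lambda_{a,b}$; I expect that writing this out carefully (choosing $\theta$, verifying $\lambda_{a,b}$ exists, is unique, positive, and smooth in $(a,b)$, and checking uniform convergence of derivatives) is the only part that requires genuine care, and it is entirely routine.
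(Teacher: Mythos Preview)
Your construction is correct and takes a genuinely different route from the paper. You build $d_{a,b}$ directly as the antiderivative of a positive $L^1$ density $\psi_{a,b}$ obtained by rescaling a fixed template $\theta$; the scaling constant is even given in closed form as $\lambda_{a,b}=(b-a)^{-1}\int_0^\infty\theta$, so the smooth dependence on $(a,b)$ is immediate. One small wording fix: you need $\theta\equiv 1$ on all of $(-\infty,0]$, not merely ``near $-\infty$'', for $\psi_{a,b}(\tau)=\theta(\lambda_{a,b}(\tau-a))$ to equal $1$ on the whole of $(0,a]$; with that adjustment the argument is complete. The paper instead constructs the \emph{inverse} $f=d_{a,b}^{-1}\co(0,b)\to(0,\infty)$ by solving $f''=q_{a,b}$, $f(0)=0$, $f'(0)=1$, where $q_{a,b}$ is a smooth step function times $2(b-x)^{-3}$; this forces $f(x)=x$ on $(0,a]$ and $f(x)=(b-x)^{-1}+cx+d$ near $b$, and convexity gives $f'\ge 1$, hence a diffeomorphism onto $(0,\infty)$. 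Your route is more direct and avoids passing through the inverse; the paper's route buys the extra contraction property $d_{a,b}'\le 1$ for free, though that property is not used elsewhere in the paper.
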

\begin{proof}
Fix a bump function 
supported on $[-1, 1]$, apply an affine change of variable to produce a 
bump function with support $[a,\frac{a+b}{2}]$, 
integrate it from $0$ to $x$ and scale to get a nonnegative function that
is zero on $(-\infty, a]$ and $1$ on $[\frac{a+b}{2}, \infty)$.
Multiply it on $(0,b)$ 
by $\frac{2}{(b-x)^3}$ and denote the result by $q_{a,b}$.
The solution of $f^{\prime\prime}=q_{a,b}$, $f(0)=0$, $f^\prime(0)=1$
satisfies $f(x)=x$ for $x\le a$, and if $\frac{a+b}{2}\le x<b$,
then $f(x)=\frac{1}{b-x} + cx+d$ for some constants $c$, $d$
continuously depending on $a$, $b$.
Now $q_{a,b}\ge 0$ and $f^\prime(0)=1$ gives $f^\prime\ge 1$,
and $d_{a,b}:=f^{-1}$ has the desired properties. 
\end{proof}

\section{Proof of main results}
\label{sec: main}

In this section we prove Theorem~\ref{thm: intro-main} and Corollary~\ref{cor: intro applications}.
Given $r>0$ and $V$ as in Theorem~\ref{thm: push to C}
consider the set of pairs $(g,x)$ such that $g\in \mathcal R_{K\ge 0}(V)$
and $x\in D_{r,g}$, where $D_{r,g}$ is the closed $r$-neighborhood of the (unique) soul of $g$.
Since $D_{r,g}$ varies continuously with $g$, 
the map $\mathfrak d_r(g, x)=g$ is a smooth fiber bundle whose fiber over $g$ is $D_{r,g}$. 
The normal exponential map to the soul of $g$ identifies $D_{r,g}$ with $D_{1,g}$,
which gives rise to an isomorphism $\mathfrak d_r\cong \mathfrak d_1$.
While this fiber bundle is not really needed for the proof, it seems to
illuminate our strategy.

Let $\mathcal{D}_0(M)$ be the subspace of $\Diff\, M$ consisting of compactly-supported
diffeomorphisms that are isotopic to the identity.
Let $\mathcal R_{\mathrm{scal}\gneqq 0}(M)$ be  
the space of complete metrics on  $M$
of nonnegative but not identically zero scalar curvature.

\begin{thm}
\label{thm: main technical}
Let $V$ be as in \textup{Theorem~\ref{thm: intro-main}(ii)} except that
$V$ can be non-spin.
Let $Y$ be a finite contractible CW complex $Y$ and $z\in Y$.
Let $f\co Y\to \mathcal R_{K\ge 0}(V)$ be a continuous map. 
Let $X\subset Y$ be a subcomplex and  
let $\de\co X\to\mathcal D_{0}(V)$ be a continuous map 
such that $f(x)$ is the pullback of $f(z)$ via $\de(x)$ for all 
$x\in X$. Let $D$ be a closed tubular neighborhood of the soul of $f(z)$
such that the support of every diffeomorphism in $\de(X)$ lies in the interior of $D$.
Let $\widehat D$ be the double of $D$, and let $\widehat\de\co X\to \Diff(\widehat D)$
that extends $\de$ by the identity outside $D$. 
Then there is a continuous map $\widehat f\co Y\to\mathcal R_{\mathrm{scal}\gneqq 0}(\widehat D)$
such that $\widehat f(x)$ is the pullback of $\widehat f(z)$ 
via $\widehat\de(x)$ for all $x\in X$.
\end{thm}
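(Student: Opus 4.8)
The plan is to implement the strategy sketched in the introduction, carried out fiberwise over the contractible parameter complex $Y$. I will first use Theorem~\ref{thm: push to C} to replace $f$ by a homotopic family $f'$ taking values in $\mathcal C_{K\ge 0}(V)$, with the extra feature that the soul-to-cylindrical-end distance is uniformly bounded, say by some $c$; since $\r_\e$ is $\Diff V$-equivariant (the analogue of Remark~\ref{rmk: diff equiv} applies to $\r_\e$, being built from the $\Diff V$-equivariant maps $g\mapsto g[r]$), the relation $f'(x)=\de(x)^*f'(z)$ on $X$ is preserved. Enlarging $D$ if necessary, I may assume $D$ contains the $c+1$-neighborhood of the soul of every $f'(z)$, and in fact of $f'(y)$ for $y$ near $z$; more carefully I work with the continuously-varying $r$-neighborhoods $D_{r,f'(y)}$ and use the fiber bundle $\mathfrak d_r$ to keep track of them. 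For $r\gg 1$ the boundary $\d D_{r,f'(y)}$ is a cylinder cross-section, so it carries an induced $K\ge 0$ metric and the metric near it is a Riemannian product $dr^2+b_y$.

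\textbf{Trivialising the neighborhoods.} Because $Y$ is contractible and $\mathfrak d_r$ is a fiber bundle, there is a continuous family of diffeomorphisms $\phi_y\co D_{r,f'(z)}\to D_{r,f'(y)}$ with $\phi_z=\id$; extending by compact isotopies (using that $Y$ is contractible, hence the structure group reduces, and that on $X$ the diffeomorphisms $\de(x)$ already carry $D_{r,f'(z)}$ to $D_{r,f'(x)}$ compatibly) I arrange that $\phi_x=\de(x)|_{D_{r,f'(z)}}$ for $x\in X$. Set $U_y:=D_{r,f'(y)}$ and pull everything back to $U:=U_z$ via $\phi_y$: I get a family $k_y:=\phi_y^*\bigl(f'(y)|_{U_y}\bigr)$ of $K\ge 0$ metrics on the fixed manifold $U$, with $k_x=\de(x)^*k_z$ for $x\in X$. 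A further ambient isotopy, continuous in $y$ and equal to the identity on $X$ and near the soul, makes the product structure near $\d U$ independent of $y$: all $k_y$ equal $dr^2+b_y$ on a collar, where $b_y$ is a $K\ge 0$ metric on the closed cross-section $\Sigma=\d U$.

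\textbf{Surgering the neck to positive scalar curvature.} Since the soul of $f(z)$ (hence of every metric in its path-component) is not flat, $\Sigma$ is not flat either—its universal cover splits off the non-flat soul factor—so by B\"ohm--Wilking~\cite{BohWil} the Ricci flow instantly deforms each $b_y$ to a metric of positive scalar curvature, continuously in $y$ and in a $\de(X)$-equivariant way (Ricci flow is natural under diffeomorphisms). Running the flow with running time a cut-off function of $r$, I produce on the collar $\Sigma\times[0,\ell]$ a family of metrics agreeing with $dr^2+b_y$ near the inner end and with positive scalar curvature near the outer end; reparametrising (isotopy implies concordance in the $\mathrm{scal}>0$ world, applied here on the collar) I may take these metrics to have $\mathrm{scal}\ge 0$ throughout the collar and to be a product $dr^2+c_y$ near $\d U$ with $c_y$ of positive scalar curvature, all continuous in $y$ and equal to $dr^2+(\de(x)$-translate$)$ for $x\in X$. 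Choosing $\ell$ (equivalently $r$) large, the region $U$ contains the supports of all $\phi_y$ and all $\de(x)$.

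\textbf{Doubling.} Pulling the modified metrics back by $\phi_y^{-1}$ gives a continuous family of metrics on the original $U_z=D$ of nonnegative scalar curvature, not identically zero (the positive-scalar part of the neck survives), agreeing near $\d D$ independently of $y$ and, over $X$, differing by $\de(x)$; these glue to metrics on the double $\widehat D$, fixed near the doubling locus, giving $\widehat f\co Y\to\mathcal R_{\mathrm{scal}\gneqq 0}(\widehat D)$ with $\widehat f(x)=\widehat\de(x)^*\widehat f(z)$, as required. \textbf{The main obstacle} is the organisational continuity: producing the families $\phi_y$ and the collar-normalising isotopies continuously over $Y$ while keeping them \emph{equal to $\de$ on $X$} and compactly supported, and making the Ricci-flow-and-concordance construction on the neck depend continuously on $y$ with the same equivariance; each ingredient is standard, but threading all of them through a single parameter space without breaking the $X$-constraint is where the care lies.
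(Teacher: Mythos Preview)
Your outline follows the paper's proof closely: push to cylindrical ends via $\r_\e$ (using its $\Diff V$-equivariance), trivialise the family of $r$-neighborhoods over contractible $Y$, surger the neck to positive scalar curvature via Ricci flow and B\"ohm--Wilking, then double. The ingredients and their order match.

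There is one genuine confusion in your trivialisation step. You claim to arrange $\phi_x=\de(x)|_{D_{r,f'(z)}}$ for $x\in X$, and then assert $k_x=\de(x)^*k_z$. Neither is right. First, arranging $\phi_x=\de(x)$ amounts to extending $\de\co X\to\Diff(D_{r,f'(z)},\d)$ over the contractible $Y$, i.e.\ showing $\de$ is null-homotopic there; in the intended application $\de$ is chosen precisely to be homotopically nontrivial in $\Diff(D^n,\d)$, so this extension is at best unclear. Second, even granting $\phi_x=\de(x)$, your pullback gives
\[
k_x=\phi_x^*f'(x)=\de(x)^*\bigl(\de(x)^*f'(z)\bigr)=(\de(x)^2)^*k_z,
\]
not $\de(x)^*k_z$.

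The paper avoids both problems by taking the trivialising diffeomorphisms to be the \emph{identity} on $X$. This is legitimate because $\de(x)$ is supported well inside $D_{r,f'(z)}$, so $D_{r,f'(x)}=D_{r,f'(z)}$ for $x\in X$, and the constant section $\id$ on $X$ extends over $Y$ by the relative covering homotopy property (the relevant fiber being contractible via the Alexander trick toward infinity). With $\phi_x=\id$ one gets $k_x=f'(x)=\de(x)^*f'(z)=\de(x)^*k_z$ directly, and since all subsequent neck constructions coincide for $x\in X$ and for $z$, the equivariance $\widehat f(x)=\widehat\de(x)^*\widehat f(z)$ is immediate after doubling. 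With this one correction your sketch is the paper's argument; your final ``pull back by $\phi_y^{-1}$'' step then becomes unnecessary and should be dropped.
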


\begin{proof}
{\bf Step 1: Push to metrics with cylindrical ends of definite size.}
The homotopy 
$\r_\e$ deforms $f$ into $\mathcal C_{K\ge 0}(V)$, so after 
changing $f$ to a nearby homotopic map we can assume that
each metric in the image of $f$
is a product outside the $\left(1+\frac{1}{\e}\right)$-neighborhood of its soul. 
Also assume that $\e$ is so small that the interior of the neighborhood
contains the support of every diffeomorphism in $\de(X)$. 
Remark~\ref{rmk: diff equiv} ensures that after the homotopy 
the map $f\vert_X$ is still induced by $\de$.

{\bf Step 2: Preparing the neck topologically.}
For $y\in Y$ set $g_y:=f(y)$, let $S_y$ be
the soul of $g_y$, let $D_{r, y}$ denote the $r$-neighborhood of $S_y$
in the metric $g_y$, and $\Si_{r,y}=\d D_{r,y}$. Note that $D_{r, y}$ 
depends continuously of $y$, $r$.

Fix $\r>1+\frac{1}{\e}$ and set $D_y:=D_{\r, y}$, $\Si_y=\d D_y$.
Since $Y$ is contractible, the pullback of 
$\mathfrak d_\r$ via $f$ is trivial, which defines a continuous map
$Y\to \mathrm{Emb}(D_{z}, V)$ that sends $y$ to the diffeomorphism $D_z\to D_y$.

The map $\mathcal{D}_0(V)\to \mathrm{Emb}(D_{z}, V)$ 
that precomposes a diffeomorphism  with the inclusion $D_{z}\hookrightarrow V$ is a fiber bundle
over the path-component of the inclusion~\cite{Pal}. Its fiber consists of diffeomorphisms
in $\mathcal{D}_0(V)$ that are the identity on $D_z$, and the space of such
diffeomorphisms is contractible
by the Alexander's trick towards infinity.
Thus the above map $Y\to \mathrm{Emb}(D_{z}, V)$ 
lifts to a map $Y\to\mathcal{D}_0(V)$, to be written as $y\to\phi_y$,
such that $\phi_{z}$ is the identity. Thus
$\phi_y(D_{z})=D_y$. 

For the metric $\phi_y^*g_y$ we 
consider the normal bundle $\nu_y$ to $\Si_{z}$ in $V$, 
the normal exponential map $e_y\co \nu_y\to V$, 
and the unit normal vector field $U_v$ to $\Si_z$ pointing outside $D_{z}$.
Let $L_y\co \nu_{z}\to \nu_y$
be the (unique) linear isometry taking $U_{z}$ to $U_y$.
Since $\phi_y^*g_y$ is a product metric outside $D_z$, the map
$e_{y}\circ L_y\circ e_{z}^{-1}$ is a self-diffeomorphism of
$V\setminus \Int D_{z}$ which fixes every point of $\Si_z$.
Restricting the composite $e_{y}\circ L_y\circ e_{z}^{-1}$ to
the $1$-neighborhood $N$ of $\Si_{z}$ in $(V\setminus \Int D_{z}, g_z)$
gives a family of embeddings of $N$  into $V\setminus \Int D_{z}$
that restrict to the identity on $\Si_{z}$.
As in the previous paragraph the space of 
diffeomorphisms of $V\setminus \Int D_z$ with compact support that are identity on $N$
is contractible, so by~\cite{Pal} the family of embeddings can be obtained by 
restricting a continuous family of diffeomorphisms $\psi_y\in\mathcal D_0(V)$
such that $\psi_z$ is the identity.

An alternative way to extend isotopies in the above two paragraphs is to 
use contractibility of $Y$. 
In summary, the diffeomorphism $\iota_y:=\phi_y\circ\psi_y$ 
has compact support, and maps $\Si_{z}$, $U_{z}$ to $\Si_y$, $U_y$, respectively.
Also $\iota_z=\mathrm{id}$. 
By compactness of $Y$ there is a compact subset of $V$
outside which every diffeomorphisms $\iota_y$
equals the identity. 
By construction the $\iota_y^*g_y$-distance $r$ from a point of $N$ to $\Si_{z}$
in independent of $y$ (the induced metrics on the fibers of $r$
may depend on $y$).

{\bf Step 3: Deforming metrics on the neck.}
Let $b_y$ be the metric induced by $\iota_y^*g_y$ on $\Si_y$.
Since $Y$ is contractible, there is a deformation retraction $q_s\co Y\to Y$, $s\in [0,1]$,
where $q_1$ is the constant map to $z$ and $q_s$ is the identity for $s\in [0,\frac{1}{3}]$. 
Then $b_{q_s(y)}$ is a homotopy through maps
$Y\to \mathcal R_{K\ge 0}(\Sigma)$
between $y\to b_y$ and the constant map to $b_{z}$.
Applying Ricci flow for time $t$
with the initial metric $b_{q_s(y)}$ yields a map $(t,s,y)\to b_{q_s(y)}^{\,t}$
which is continuous by the smooth dependence of the Ricci flow under the initial data.

The normal sphere bundle to the soul is not flat, see Lemma~\ref{lem: flat soul} below.
Hence a result in~\cite{BohWil} stated in Lemma~\ref{lem: Bohm-Wilking nonnegative} below, 
shows that $b_{q_s(y)}^{\,t}$ has positive scalar curvature for all small positive $t$.
Compactness of $Y$ gives $\tau>0$ such that
the flow exists and has positive scalar curvature 
for all $t\in [0,\tau]$, $y\in Y$, $s\in [0,1]$.

Fix a continuous map $\mu\co [0,1]\to [0,\tau]$
with $\mu^{-1}(0)=[0,\frac{1}{3}]$ and $\mu^{-1}(\tau)=\left[\frac{1}{2}, 1\right]$. 
Then $s\to b_{q_s(y)}^{\,\mu(s)}$ is a homotopy from $y\to b_y$ to $y\to (b_z)^\tau$
such that $b_{q_s(y)}^{\,\mu(s)}$ equals $b_y$, $b_z^\tau$ for $s$ in $\left[0,\frac{1}{3}\right]$, 
$\left[\frac{1}{2}, 1\right]$,
respectively, and $b_{q_s(y)}^{\,\mu(s)}$ has positive scalar curvature 
on $\left(\frac{1}{3}, 1\right]$.

By~\cite[Lemma 3]{PetYun} for all sufficiently large $a$ the metric 
$ds^2+b_{q_{_{s/a}}\!(y)}^{\,\mu(s/a)}$  has
positive scalar curvature on $\left(\frac{a}{3},a\right]\times \Si_{z}$,
and nonnegative sectional curvature on $\left[0,\frac{a}{3}\right]\times \Si_{z}$
where $b_{q_{_{s/a}}\!(y)}^{\,\mu(s/a)}=b_y$
for $s\in \left[0, \frac{a}{3}\right]$ 
and $b_{q_{_{s/a}}\!(y)}^{\,\mu(s/a)}=(b_{z})^\tau$ for
$s\in \left[\frac{a}{3}, a\right]$.

Let us pick $a$ so large that $a>1$ and the union of the supports of $\iota_y$ over $y\in Y$
lies in the interior of $D_{\r+a, z}$.

The metrics $\iota_y^*g_y$ on $D_{\r+\frac{1}{3}, z}$ 
and $dr^2+(b_{q_{_{r/a}}\!(y)})^{\mu(r/a)}$ on $D_{\r+a, z}\setminus D_z$
restrict to the metric $dr^2+b_y$ on $D_{\r+\frac{1}{3}, z}\setminus D_z$ 
where $r$ is the $g_{z}$-distance to $\Si_{z}$, so together they define a metric
on  $D_{\r+a, z}$ which we denote  $\bar g_y$.

{\bf Step 3: Doubling.}
The pullback of $\bar g_y$ via $\iota_y^{-1}$
is a metric on $D_{\r+a, z}$ such that $\bar g_y=g_y$ on $D_y$
and $\bar g_y=dr^2+(b_z)^\tau$ near $\Si_{\r+a, z}$.
If $\widehat{D}_{\r+a, z}$  is the double of $D_{\r+a, z}$ 
such that the doubling involution $j$ preserves every geodesic orthogonal to
$\Si_{\r+a, z}$, then $\bar g_y$ and $j^*\bar g_z$ form a smooth metric on $\widehat{D}_{\r+a, z}$.
Denote the metric by $\widehat f(y)$. 
Then the map $y\to \widehat f(y)$ is continuous, and for every $x\in X$
the restriction $\widehat f(x)\vert_{D_z}$ is the pullback of 
$g_z=\widehat f(z)\vert_{D_z}$ via $\de(x)$, while 
$\widehat f(x)=\widehat f(z)$ on $\widehat{D}_{\r+a, z}\setminus D_z$.
Thus $\widehat f$ has desired properties.
\end{proof}

\begin{rmk}
\label{rmk: codim 1}
The conclusion of Theorem~\ref{thm: main technical} is true 
if the assumptions on $V$ are replaced with ``$V$ is the product of $\R$
and a closed non-flat manifold of $K\ge 0$''.
In this case the souls are precisely the fibers of the projection onto the $\R$-factor.
Given a basepoint $*\in V$ every metric  
$g\in\mathcal R_{K\ge 0}(V)$ has a unique soul that contains $*$, and 
the soul depends continuously on $g$ (because the local
$\R$-factor is uniquely determined by the metric, and the limit
of local $\R$-factors is a local $\R$-factor).
Since both ends of $V$ are already cylindrical, Step 1 is unnecessary.
The rest of the proof goes through without change. 
\end{rmk}

\begin{lem}
\label{lem: Bohm-Wilking nonnegative}
Let $(M, g)$ be a closed manifold of $K\ge 0$ that is not flat.
Then  Ricci flow $g_t$ with $g_0=g$ has positive scalar curvature for all small positive $t$.
\end{lem}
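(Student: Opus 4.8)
The plan is to run the Ricci flow and control the scalar curvature by the maximum principle; the fine pinching estimates of~\cite{BohWil} are not actually needed here, since the evolution equation for scalar curvature already does the job. By Hamilton's short-time existence theorem there is $T>0$ and a smooth family $g_t$, $t\in[0,T)$, solving the Ricci flow with $g_0=g$ on the closed manifold $M$, and under the flow the scalar curvature $R$ of $g_t$ satisfies
\[
\partial_t R=\Delta_{g_t}R+2|\Ric|^2 ,
\]
where $\Delta_{g_t}$ and $\Ric$ denote the Laplacian and Ricci tensor of $g_t$.

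First I would record the two relevant properties of the initial metric $g$. Since $K\ge 0$, each Ricci curvature is a sum of sectional curvatures and hence nonnegative, so $R\ge 0$ at $t=0$. Because $M$ is not flat, the curvature tensor of $g$ is nonzero at some point $p$; as the sectional curvatures at $p$ determine the curvature tensor at $p$ (polarization), some sectional curvature at $p$ is nonzero, hence positive, so $R(p)>0$. Thus at $t=0$ we have $R\ge 0$ and $R\not\equiv 0$.

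Next, dropping the nonnegative term $2|\Ric|^2$ gives the differential inequality $\partial_t R\ge\Delta_{g_t}R$, so $R$ is a supersolution of the (time-dependent) heat equation on $M\times[0,T)$. The weak maximum principle yields $\min_M R(\cdot,t)\ge\min_M R(\cdot,0)\ge 0$ for all $t$, so $R\ge 0$ throughout. If $R$ vanished at some point $(x_0,t_0)$ with $t_0\in(0,T)$, then, since $M$ is connected, the strong maximum principle for nonnegative parabolic supersolutions would force $R\equiv 0$ on $M\times[0,t_0]$, contradicting $R(\cdot,0)\not\equiv 0$. Hence $R>0$ on $M\times(0,T)$, in particular for all small positive $t$.

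The argument is essentially routine, and I do not anticipate a genuine obstacle. The only step warranting a word of care is the application of the strong maximum principle with a time-dependent Laplacian; but on the compact set $M\times[0,t_0]$ the operator $\partial_t-\Delta_{g_t}$ has smooth, uniformly elliptic principal part, so the classical statement (equivalently, Hamilton's maximum principle) applies verbatim.
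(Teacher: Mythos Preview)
Your argument is correct, and it is genuinely different from the paper's. The paper invokes~\cite{BohWil} to get that the Ricci flow preserves nonnegative Ricci curvature and makes it strictly positive when $\pi_1(M)$ is finite; it then passes to the universal cover, splits off the Euclidean factor via Cheeger--Gromoll, uses Chen--Zhu uniqueness~\cite{CheZhu-uniq} to split the flow accordingly, and applies B\"ohm--Wilking on the compact simply-connected factor to obtain positive Ricci (hence positive scalar) curvature there. Your route bypasses all of this: you only need the evolution equation $\partial_t R=\Delta R+2|\Ric|^2$, the elementary observation that $K\ge 0$ and non-flatness force $R\ge 0$ with $R\not\equiv 0$ at $t=0$, and the strong parabolic maximum principle. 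This is strictly more economical for the stated conclusion, since the lemma is only ever used to produce positive scalar curvature. The paper's detour through~\cite{BohWil} yields the stronger intermediate statement of positive Ricci curvature on the simply-connected factor, but that extra strength is not exploited downstream.
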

\begin{proof}
According to~\cite[Proposition 2.1 and Theorem A]{BohWil} for all small positive $t$
the metric $g_t$ has nonnegative Ricci curvature, which is positive 
if and only if $\pi_1(M)$ is finite. The universal cover $\widetilde M$
of $M$ splits as the product of a Euclidean space and a closed 
simply-connected manifold of $K\ge 0$, and the latter has dimension $\ge 2$
since $M$ is not flat~\cite{CheGro-soul}.
The pullback $\widetilde{\! g}_t$ of $g_t$ to $\widetilde M$ is a solution of Ricci flow.
The uniqueness of Ricci flow solutions with complete initial metrics of bounded curvature~\cite{CheZhu-uniq}
implies that the flow coincides with the product of Ricci flows on the 
factors. The flow leaves the Euclidean factor unchanged, and instantly 
turns the initial metric on the simply-connected factor to a metric of positive Ricci curvature,
thanks to the above-mentioned result in~\cite{BohWil}.
In particular, $\widetilde{\! g}_t$ has positive scalar curvature, and hence so does $g_t$.
\end{proof}

\begin{proof}[Proof of Theorem~\ref{thm: intro-main}]
Fix a smoothly embedded $n$-disk in $V$. Extending by the identity
we think of $\Diff (D^n,\d)$ as a subgroup of $\mathcal D_0(V)$.
Fix an order two element of $\pi_k(\Diff (D^n,\d))$ detected by the $\a$-invariant 
as in~\cite{CSS}, and represent it by a continuous map $\de\co S^k\to \mathcal D_0(V)$.
Consider the map $S^k\to \mathcal R_{K\ge 0}(V)$
that sends $x$ to the pullback of $g$ via $\de(x)$. Its homotopy class 
has order at most two. Arguing by contradiction assume the map extends to a map from $D^{k+1}$.
By assumption the soul is not flat,
hence the normal sphere bundle to the soul
is not a flat manifold, see Lemma~\ref{lem: flat soul} below.
Also the double of any tubular neighborhood of the soul is spin 
because so is $V$, see Lemma~\ref{lem: double SW}.
Apply Theorem~\ref{thm: main technical} or Remark~\ref{rmk: codim 1}
for $(Y, X)=(D^{n+1}, S^k)$
to conclude that the map sending $x\in S^k$ to the pullback of $\widehat g$ via  $\widehat\de(x)$ 
is null-homotopic. By~\cite{CSS} the $\a$-invariant of the map is nontrivial,
which is a contradiction.
\end{proof}

\begin{rmk}
The proof of Theorem~\ref{thm: intro-main} fails when the soul is flat: 
if the flat soul has codimension one, then the double of the normal disk bundle
to the soul is flat, hence the Dirac operator is not invertible, and if
the flat soul has codimension $\ge 2$, 
its normal exponential map need not be a diffeomorphism
by Lemma~\ref{lem: flat soul exp not diffeo}.
\end{rmk}

\begin{lem} 
\label{lem: flat soul}
Let $B$ be a soul of an open connected complete $n$-manifold of $K\ge 0$.
Then its normal sphere bundle $E$ is flat if and only if $B$ is flat and $\dim(B)\ge n-2$.
\end{lem}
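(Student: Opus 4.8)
The plan is to separate the curvature content from the topology, via two reductions. First, $E$ is diffeomorphic to the boundary of a small closed tubular neighborhood of $B$, namely a level set $\{d(\cdot,B)=\varepsilon\}$ with $\varepsilon$ below the normal injectivity radius of $B$; by Riccati comparison $d(\cdot,B)$ is convex where smooth, so this level set is a convex hypersurface, and the Gauss equation (as in Lemma~\ref{lem: convex implies nonnnegative curv}) shows $E$ admits a metric of $K\ge 0$. Second, the splitting theorem~\cite{CheGro-soul} says the universal cover of a closed manifold of $K\ge 0$ is isometric to $\R^j\times N$ with $N$ compact and simply connected; since such a cover is contractible exactly when $N$ is a point, a closed manifold carrying a metric of $K\ge 0$ is flat if and only if it is aspherical. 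Applying this both to $E$ and to $B$ (a soul, hence closed with a metric of $K\ge 0$), the lemma is equivalent to a purely topological statement: writing $m:=n-\dim B\ge 1$ for the rank of the normal bundle of $B$, so that $E$ is the total space of a fiber bundle $S^{m-1}\to E\to B$, one has $E$ aspherical if and only if $B$ is aspherical and $m\le 2$.

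For the direction ``$\Leftarrow$'': when $m=1$ the bundle $E\to B$ is a double cover, so $\widetilde E=\widetilde B$ is contractible; when $m=2$ the pullback of the circle bundle $E\to B$ to the contractible space $\widetilde B$ is trivial, hence $\widetilde B\times S^1$ covers $E$, so $\widetilde E$ is the universal cover $\widetilde B\times\R$ of $\widetilde B\times S^1$, again contractible. For ``$\Rightarrow$'', suppose $E$ is aspherical. To exclude $m\ge 3$: then $S^{m-1}$ is simply connected, so $\pi_1(E)\cong\pi_1(B)$ and the pullback of $\widetilde B\to B$ is the universal cover $\widetilde E$, fitting into a fibration $S^{m-1}\to\widetilde E\to\widetilde B$ with $\widetilde E$ contractible; hence $S^{m-1}\simeq\Omega\widetilde B$, and feeding $H_\ast(S^{m-1};\Z)$ into the Serre spectral sequence of the path fibration over $\widetilde B$ forces $H_\ast(\widetilde B;\Z)$ to be nonzero in every degree divisible by $m$, which is impossible since $\widetilde B$ is a manifold of finite dimension $n-m$. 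So $m\le 2$.

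It remains to show $B$ is aspherical. This is automatic for $m=1$, where $\widetilde E=\widetilde B$. For $m=2$, pull the circle bundle $E\to B$ back to $\widetilde B$ to get $S^1\to\widehat E\to\widetilde B$ with $\widehat E\to E$ a covering; from its homotopy exact sequence $\pi_1(\widehat E)$ is a cyclic quotient of $\Z$, and since $\pi_1(E)$ is the fundamental group of a flat manifold (by the first paragraph), hence torsion-free, we get $\pi_1(\widehat E)\in\{0,\Z\}$. If $\pi_1(\widehat E)=\Z$, then $\widehat E$ is covered by the contractible $\widetilde E$, so $\pi_{\ge 2}(\widehat E)=0$, and the exact sequence of $S^1\to\widehat E\to\widetilde B$ gives $\pi_{\ge 2}(\widetilde B)=0$. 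If $\pi_1(\widehat E)=0$, then $\widehat E=\widetilde E$ is contractible and the same sequence gives $\pi_i(\widetilde B)\cong\pi_{i-1}(S^1)$, so $\widetilde B\simeq K(\Z,2)$, which again contradicts finite-dimensionality of the manifold $\widetilde B$. Either way $\widetilde B$ is contractible, so $B$ is aspherical, and the reduction in the first paragraph finishes the proof.

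I expect the only real obstacle to be the $m=2$ half of ``$\Rightarrow$'': ruling out a nontrivial normal circle bundle over a non-aspherical base is exactly what forces one to use both torsion-freeness of Bieberbach groups and the finite-dimensionality obstruction to $\widetilde B$ being an Eilenberg--MacLane space $K(\Z,2)$. The other point needing care is that $E$ genuinely carries a metric of $K\ge 0$ without assuming the normal exponential map of $B$ is a diffeomorphism; this is handled by passing to a small embedded convex tube rather than to the whole normal disk bundle.
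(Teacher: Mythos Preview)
Your proof is correct, and the opening reduction ``flat $\Leftrightarrow$ aspherical for closed manifolds of $K\ge 0$'' matches the paper's first move (the paper cites \cite{GuiWal-proj} for the metric of $K\ge 0$ on $E$ rather than your convex-tube argument, but either works). The ``$\Leftarrow$'' direction is handled the same way in both.

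For ``$\Rightarrow$'' the two arguments genuinely diverge. The paper gives a single uniform count via the virtual rank of $\pi_1$: since $E$ is flat, $\pi_1(E)$ is virtually $\Z^{n-1}$; the portion $\pi_1(S^k)\to\pi_1(E)\to\pi_1(B)\to\pi_0(S^k)$ of the homotopy exact sequence then forces $\pi_1(B)$ to be virtually $\Z^m$ with $m\ge n-2$, while the splitting structure of closed manifolds of $K\ge 0$ gives $m\le\dim B=n-1-k$, hence $k\le 1$; the same rank equality $m=\dim B$ then makes $B$ aspherical. Your argument instead splits into cases: for $m\ge 3$ you run the Serre spectral sequence of the pulled-back fibration $S^{m-1}\to\widetilde E\to\widetilde B$ to show $\widetilde B$ would need unbounded cohomology, and for $m=2$ you invoke torsion-freeness of Bieberbach groups together with the finite-dimensionality obstruction to $\widetilde B\simeq K(\Z,2)$. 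Your route is more purely homotopy-theoretic and sidesteps the (somewhat implicit) rank bound $m\le\dim B$; the paper's route is shorter, treats all fiber dimensions at once, and leans harder on the $K\ge 0$ structure theory already in play elsewhere in the paper.
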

\begin{proof}
By~\cite{CheGro-soul} a closed connected manifold
of $K\ge 0$ has a finite cover diffeomorphic to the product of 
a simply-connected closed manifold and a torus. Any connected flat
manifold is finitely covered by a torus. 
Write the normal sphere bundle to the soul as $S^k\to E\to B$ and refer to its 
homotopy exact sequence as HES.
Here $\dim(E)=n-1=k+\dim(B)$, and $E$, $B$ carry metrics of $K\ge 0$
(for $E$ this is due to~\cite{GuiWal-proj}).
If $B$ is flat and $k\le 1$, then HES implies that $E$ is aspherical, and hence flat.
Conversely, if $E$ is flat, then $\pi_1(E)$ is virtually-$\Z^{n-1}$.
The  portion 
$\pi_1(S^k)\to \pi_1(E)\to\pi_1(B)\to\pi_0(S^k)$
of HES
implies that $\pi_1(B)$ is virtually-$\Z^m$ with $m\ge n-2$.  
Now $m\le \dim(B)$ gives $n-2\le n-k-1$, i.e., $k\le 1$.
Then HES shows that $B$ is aspherical, and hence flat. 
\end{proof}

\begin{proof}[Proof of Corollary~\ref{cor: intro applications}]
Let us first discuss why the manifolds in (1)-(10) admit complete metrics of $K\ge 0$.
Having $K\ge 0$ is preserved under products, which covers (1) and (10).
Another basic method is to realize the manifold as a base of Riemannian submersion
from a complete manifold of $K\ge 0$, which is possible for tangent bundles
of homogeneous spaces~\cite[Example 2.7.1]{GroWal-book} and for
the tautological line bundles over projective spaces,
e.g., as $\cpn=(S^{2n+1}\times\C)/S^1$,
where in fact, by varying $S^1$-actions on $\C$ one gets all complex line bundles
over $\cpn$, see the proof of~\cite[Theorem 6.1]{BelWei-gafa}. 
This covers (2), (5)--(7), and the first example in (8). 
The other examples are due to K.~Grove and W.~Ziller:
(3)--(4) can be found in Theorem B, Corollary 3.13, Proposition 3.14 in~\cite{GroZil-milnor},
the remaining cases of (8) are treated in
Theorem 4.5 and the remark that follows it in~\cite{GroZil-lift},
and (9) appears in~\cite[Theorem 1]{GroZil-lift}.

Verifying that $V$ is spin is immediate from
Lemma~\ref{lem: double SW} and the Whitney sum formula for the 
Stiefel-Whitney class; for more details on (5) see 
Example~\ref{ex: RPn}.

Finally, let us check the assumptions (i) or (ii) of Theorem~\ref{thm: intro-main}.
The bundles in (1) satisfy (i) by Remark~\ref{rmk: codim 1}. The condition
(B) of Section~\ref{sec: transitive} implies (ii).
By Example~\ref{ex: product times L} we can assume that $L$ is a point.
The bundles in (3) have no section (because 
every $\R$ or $\R^2$ bundle over $S^n$ with $n\ge 4$ is trivial),
and hence by Lemma~\ref{lem: R3 bundles no section} they satisfy (B).   
By Remark~\ref{rmk: euler hurewicz}
the bundles in (2), (4)--(5), (7)--(8) 
satisfies (A), and hence (B). By Lemma~\ref{lem: TCPn THPn}
the bundles in (6) satisfy (B), and 
(9) is covered in Example~\ref{ex: SO(3) bundles over CP2}.
Invoking Theorem~\ref{thm: intro-main} completes the proof.
\end{proof}

\small
\bibliographystyle{amsalpha}
\bibliography{cyl}

\end{document}